\newtheorem{Theorem}{Theorem}[section]
\newtheorem{Lemma}{Lemma}[section]
\theoremstyle{definition}
\theoremstyle{remark}
\newtheorem{Remark}{Remark}[section]
\numberwithin{equation}{section}
\renewcommand{\u}{{\bf u}}
\newcommand{\R}{{\mathbb R}}
\newcommand{\Dv}{{\rm div}}
\newcommand{\Cu}{{\rm curl}}
\newcommand{\x}{{\bf x}}
\newcommand{\dl}{\delta}
\def\f{\frac}
\def\D{\Delta }
\def\hf1{^\f{1}{1-\xi^2}}
\def\be{\begin{equation}}
\def\en{\end{equation}}
\def\bs{\begin{split}}
\def\es{\end{split}}
\newcommand{\F}{{\mathtt F}}
\author{Xianpeng Hu and Fanghua Lin}
\address{Courant Institute of Mathematical Sciences, New York
University, New York, NY 10012.} \email{xianpeng@cims.nyu.edu}
\address{Courant Institute of Mathematical Sciences, New York
University, New York, NY 10012 and NYU-ECNU Institute of Mathematical
Sciences at NYU Shanghai, 3663, North Zhongshan Rd., Shanghai, PRC
200062.} \email{linf@cims.nyu.edu}
\title[Incompressible viscoelastic fluid]
{Global Solutions of two dimensional incompressible viscoelastic flows
with discontinuous initial data}
\keywords{Incompressible viscoelastic fluid, weak solution, effective viscous flux, global well-posedness}
 \subjclass[2000]{35A05, 76A10, 76D03.}
\date{\today}
\begin{document}

\begin{abstract}
The global existence of weak solutions of the incompressible viscoelastic flows
in two spatial dimensions has been a long standing open problem, and it
is studied in this paper. We show the global existence if the initial
deformation gradient is close to the identity matrix in $L^2\cap L^\infty$,
and the initial velocity is small in $L^2$ and bounded in $L^p$, for some $p>2$.
While the assumption on the initial deformation gradient is automatically
satisfied for the classical Oldroyd-B model, the additional assumption on
the initial velocity being bounded in $L^p$ for some $p>2$ may due to
techniques we employed. The smallness assumption on the $L^2$ norm of the
initial velocity is, however, natural for the global well-posedness .
One of the key observations in the paper is that the velocity and the
\textquotedblleft effective viscous flux\textquotedblright $\mathcal{G}$
are sufficiently regular for positive time. The regularity of $\mathcal{G}$
leads to a new approach for the pointwise estimate for the deformation
gradient without using $L^\infty$ bounds on the velocity gradients in
spatial variables.
\end{abstract}

\maketitle

\section{Introduction}

The flow of incompressible viscoelastic fluids can be described by the
following equations which are equivalent to the classical Oldroyd-B model
(see \cite{PC4, PC, LEILIUZHOU, LINLIUZHANG, LZ}):

\begin{equation}\label{e1}
\begin{cases}
\partial_t\u+\u\cdot\nabla\u-\mu\D\u+\nabla P=\Dv(\F\F^\top),\\
\partial_t\F+\u\cdot\nabla\F=\nabla\u\F,\\
\Dv\u=0,
\end{cases}
\end{equation}
where $\u\in \R^2$ denotes the velocity of the fluid, $\F\in \mathcal{M}$ is the deformation
gradient ($\mathcal{M}$ is the set of $2\times 2$ matricies with $\det\F=1$), and $P$ is the
pressure of the fluid, which is a Lagrangian multiplier due to the
incompressibility of the fluid $\Dv\u=0$.
The viscosity $\mu$ is a positive constant, and will be assumed to be one throughout
this paper for conveniences. If solutions $(\u,\F)$ to \eqref{e1} are smooth, it was
well-known facts, see \cite{CZ,LEILIUZHOU, LINLIUZHANG} that

\begin{equation}\label{c}
\begin{cases}
\Dv(\F^\top)(t)=0,\\
\det\F(t)=1
\end{cases}
\end{equation}
for all $t>0$ whenever \eqref{c} holds initially. Beside conserved quantities
described in \eqref{c}, it was also shown in \cite{LEILIUZHOU, Q} that

\begin{equation}\label{c1}
\F_{lk}\partial_{x_l}\F_{ij}(t)=\F_{lj}\partial_{x_l}\F_{ik}(t)
\end{equation}
for all $t>0$ if \eqref{c1} is valid initially.

We consider here the Cauchy problem for the system \eqref{e1}, and the
initial data will be specified by
\begin{equation}\label{IC}
(\u, \F)|_{t=0}=(\u_0, \F_0)(x)\quad \textrm{ for all } \quad x\in\R^2.
\end{equation}
One can easily generalize discussions here to the case of the initial-boundary value
problem (\cite{LZ}) or the Cauchy problem on a periodic box.
For classical solutions of \eqref{e1}-\eqref{IC} or related Oldroyd-B models, authors in \cite{CZ, LEILIUZHOU,
LINLIUZHANG, LZ, PC} have established various global existence and well-posedness
of solutions to \eqref{e1}-\eqref{IC}, say in $H^2$, whenever the initial data
is a $H^2$ small perturbation around the equilibrium $(0, I)$, where $I$ is the
identity matrix. We refer to readers also \cite{HW, Q, TS, TS1, LM, MA, PC} and references
therein for local and global existence of solutions of closely related models.
We shall point out in particular the works \cite{TS, TS1}, in which the authors used
the hyperbolic nature of the system  \eqref{e1}-\eqref{IC} when $\mu=0$ to establish an
interesting global existence result for classical solution in a subspace of $H^s$ ($s\ge 8$) when the
initial date is also a small perturbation in that space of $(0, I)$ provided the spatial
dimension is $3$ due to the dispersive structure (see \cite{LSZ} for an almost global existence in dimension two). Note such a result is unknown for the Euler equations (when the elastic
effects are not present).
For the global existence of strong solutions near the equilibrium for compressible
models of \eqref{e1}, we refer interested readers to \cite{HW, Q} and the references
therein. Numerical evidence for singularities was provided in \cite{SB}. The regularity in terms of bounds on the elastic stress tensor was established in \cite{CM, KMT}. In \cite{PC1}
authors proved global existence for small data with large gradients for Oldroyd-B. Regularity for diffusive Oldroyd-B equations in dimension two for large data were obtained in the creeping flow regime (coupling with the time independent Stokes equations, rather than Navier-Stokes) in \cite{PC2} and in general in \cite{PC3}.
We note also that for the Oldroyd-B type models with a finite
relaxation time the global existence of weak solutions with natural initial data had
been verified in \cite{LM} under the corotational assumption. Recently a remarkable global existence result for weak solutions
for the FENE dumbbell model with suitable initial data has been constructed by Masmoudi in
\cite{MA} through a detailed analysis of the defect measure associated with the
approximations.  There is no such result for the Oldroyd-B model.The main result in this paper can be viewed therefore as the first step
toward the solution of the corresponding problem for the Oldroyd-B model.

The construction of global solutions in \cite{CZ, LEILIUZHOU, LINLIUZHANG, LZ} depends
crucially on various conserved quantities, in particular, \eqref{c} and \eqref{c1} (see also \cite{PC1}).
Unfortunately, when the initial data are discontinuous, the proofs in \cite{CZ, PC1, LEILIUZHOU, LINLIUZHANG, LZ}
simply can not be made to work.
On the other hand, one wishes to construct global solutions with the initial data
in a natural functional space which can be read off from the basic energy law
associated with sufficiently regular solutions of \eqref{e1}-\eqref{IC}
$$\f12\Big(\|\u\|_{L^2}^2+\|\F\|_{L^2}^2\Big)+\mu\int_0^t\|\nabla\u(s)\|^2_{L^2}ds=\f12\Big(\|\u_0\|_{L^2}^2+\|\F_0\|_{L^2}^2\Big).$$
Thus for the classical Oldroyd-B model in two spatial dimensions, the natural initial
velocity should be in $L^2$ and the deformation gradient can be chosen to be the identity
(or small perturbations in $L^2\cap L^\infty$ of $I$). To prove global existence of weak
solutions under such initial conditions would therefore be of interest in theories of
such fluids in both physics and mathematical analysis.
From this point of view, this paper made a further step in understanding the system
\eqref{e1}-\eqref{IC} with the constraints \eqref{c}-\eqref{c1}.

To facilitate the presentation, we introduce the notations
\begin{equation}\label{C_0}
\varepsilon_0=\|\F_0-I\|_{L^\infty}^2+\int_{\R^2}\left(|\F_0-I|^2+|\u_0|^2\right)(1+|x|^2)dx
\end{equation}
where the weight $1+|x|^2$ serves to compensate the growth
of the fundamental solution of the Laplacian at infinity (which is not needed when the spatial
dimension is $3$);
\begin{equation}\label{A}
\begin{split}
A(T)&=\sup_{0\le t\le
T}\int_{\R^2}\Big(|\u(x,t)|^2+|\F(x,t)-I|^2+\sigma(t)|\nabla\u(x,t)|^2+\sigma(t)^2|\mathcal{P}\dot\u(x,t)|^2\Big)dx\\
&\qquad+\int_0^T\int_{\R^2}\left(|\nabla\u|^2+\sigma(t)|\dot\u|^2+\sigma(t)^2|\nabla\mathcal{P}\dot\u|^2\right)dxdt,
\end{split}
\end{equation}
where $\dot\u=\partial_t\u+\u\cdot\nabla\u$ is the material derivative of the velocity, $\sigma(t)=\min\{1,t\}$, and the operator $\mathcal{P}$ denotes the projection to the divergence free vector field; and
\begin{equation}\label{B}
B(T)=\|\F-I\|^2_{L^\infty(\R^2\times[0,T])}.
\end{equation}

The key difficulty to show the convergence of approximating solutions is to show the weak convergence of $\F\F^\top$ at least in the sense of distributions, which requires a strong convergence of $\F$ in $L^2(\R^2)$. To
overcome this difficulty, we introduce a quantity which is a suitable combination of effects from velocity and that from the deformation gradient. This quantity will be called  \textit{effective viscous flux} , and it is defined as
$$\mathcal{G}=\nabla\u-(-\D)^{-1}\nabla\mathcal{P}\Dv(\F\F^\top-I).$$ One can easily check from the first equation in \eqref{e1} that
$$\D\mathcal{G}=\nabla\mathcal{P}\dot\u.$$
From this and \eqref{A}, one can expect a bound of $\mathcal{G}$ in $H^1$ for positive time, which is better than either components of $\mathcal{G}$ that appeared to be.

We give a precise formulation of our results. First, denoting $\F=(\F_1,\F_2)$ where $\F_1$ and $\F_2$ are columns of $\F$, then the second equation in \eqref{e1} can be written as
$$\partial_t\F_j+\u\cdot\nabla\F_j=\F_j\cdot\nabla\u$$ for $j=1,2$. Since $\Dv\F_j=0$ due to $\Dv\F^\top=0$, the equation for $\F_j$ can be further rewritten as
$$\partial_t\F_j+\Dv(\F_j\otimes\u-\u\otimes\F_j)=0,$$
where $(a\otimes b)_{ij}=a_ib_j.$
Next, we say that the pair $(\u,\F)$ is a weak solution of \eqref{e1} with Cauchy data \eqref{IC} provided that $\F,\u,\nabla\u\in L^1_{loc}(\R^2\times\R^+)$ and for all
test functions $\beta, \psi\in\mathcal{D}(\R^2\times \R^+)$ with $\Dv\psi=0$ in $\mathcal{D}'(\R^2\times \R^+)$
\begin{equation}\label{e2}
\int_{\R^2}(\F_j)_0\beta(\cdot, 0)dx+\int_0^\infty\int_{\R^2}(\F_j\beta_t+(\F_j\otimes\u-\u\otimes\F_j):\nabla\beta )dxdt=0
\end{equation}
for $j=1,2$,
and
\begin{equation}\label{e3}
 \begin{split}
&\int_{\R^2}\u_0\psi(\cdot, 0)dx+\int_0^\infty\int_{\R^2}\Big[\u\cdot\partial_t\psi+(\u\otimes\u-\F\F^\top):\nabla\psi\Big] dxdt\\
&\quad=\int_0^\infty\int_{\R^2}\nabla\u:\nabla\psi dxdt.
 \end{split}
\end{equation}

Now, we are ready to state the main theorem.
\begin{Theorem}\label{mt}
Let $\|\u_0\|_{L^p}\le \alpha$ for some $p>2$, and assume that $\varepsilon_0\le \gamma$ for a sufficiently small $\gamma$ that may depend on $\alpha$ and $p$. The Cauchy problem \eqref{e1}-\eqref{IC} with constraints \eqref{c}-\eqref{c1} has a global weak solution $(\u, \F)$  which is actually smooth for positive time. Moreover, there exist a positive constant $\theta$ that depends on $p$ and a positive constant $C$ that may depend on $p$ and $\alpha$  such that
$$A(t)\le C\varepsilon_0^\theta,\quad\textrm{and}\quad B(t)\le C\varepsilon_0^\theta$$ for all $t\in \R^+$.
\end{Theorem}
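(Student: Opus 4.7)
The plan is to build global weak solutions as limits of smooth approximate solutions, the bulk of the work being a uniform a priori bound of the form $A(T)+B(T)\le C\varepsilon_0^\theta$. I would start by mollifying $(\u_0,\F_0)$ so that the approximants retain the constraints \eqref{c} and \eqref{c1}, invoke classical local well-posedness for smooth data (\cite{LEILIUZHOU,LINLIUZHANG}), and then run a continuity argument: assume $A(T)+B(T)\le 2\eta$ on a maximal interval and improve it to $A(T)+B(T)\le \eta$, provided $\varepsilon_0$ is small compared to $\alpha$ and $p$. Passage to the limit is standard once the uniform bounds are in hand, so the real content is the closed system of estimates.

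The control comes in two layers. The basic energy law delivers $\|\u\|_{L^\infty_t L^2}^2+\|\F-I\|_{L^\infty_t L^2}^2+\|\na\u\|_{L^2_{t,x}}^2\lesssim \varepsilon_0$; here $\Dv\F^\top=0$ is what matches elastic work against the dissipation. The time-weighted higher-order parts of $A(T)$ are obtained by testing the momentum equation against $\sigma(t)\dot\u$ and its time-differentiated version against $\sigma(t)^2\mathcal{P}\dot\u$, integrating by parts, and using the transport equation for $\F$ together with the identity \eqref{c1} to close the cross terms generated by $\Dv(\F\F^\top)$. The hypothesis $\u_0\in L^p$ for some $p>2$ is propagated through an $L^p$ estimate for Stokes with source $\Dv(\F\F^\top-I)$; this is essential below, since $H^1\not\hookrightarrow L^\infty$ in two dimensions and a slightly higher integrability is required to extract pointwise information from $\mathcal{G}$.

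The core new input is the effective viscous flux $\mathcal{G}$. Applying $\mathcal{P}$ to the momentum equation gives $-\D\u=\mathcal{P}\Dv(\F\F^\top-I)-\mathcal{P}\dot\u$, hence $\D\mathcal{G}=\na\mathcal{P}\dot\u$. Combined with the $\sigma$-weighted bounds on $\dot\u$ and $\na\mathcal{P}\dot\u$ provided by $A(T)$, this promotes $\mathcal{G}$ to $H^1$ and, using the propagated $L^p$ control, to $L^\infty$ (with a logarithmic loss) for positive time, with norm bounded by a power of $\varepsilon_0$. The identity
\begin{equation*}
\na\u=\mathcal{G}+(-\D)^{-1}\na\mathcal{P}\Dv(\F\F^\top-I)
\end{equation*}
then splits $\na\u$ into a regular piece and a singular integral acting on the small quantity $\F\F^\top-I$, which is exactly what is needed for pointwise bounds on $\F-I$ without any direct $L^\infty$ bound on $\na\u$.

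The main obstacle is closing the bound on $B(T)=\|\F-I\|_{L^\infty}^2$. Along the flow $X(t,\cdot)$ of $\u$, the second equation of \eqref{e1} reads $\frac{d}{dt}(\F-I)(X(t,x),t)=\na\u\cdot\F$. Substituting the decomposition above and estimating the Calderon-Zygmund type term $(-\D)^{-1}\na\mathcal{P}\Dv(\F\F^\top-I)$ in $L^\infty$ via a logarithmic Sobolev-type inequality (with the $L^2\cap L^\infty$ smallness of $\F\F^\top-I$ controlling the linear factor and the $L^p$-upgrade controlling the log factor), one obtains an integral inequality along trajectories of the form $B(t)\le C\varepsilon_0^\theta+C\int_0^t B(s)(1+\log(1/B(s)))\,ds$ with small source, which by an Osgood-Gronwall argument yields $B(T)\le C\varepsilon_0^\theta$ and closes the bootstrap. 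Finally, uniform bounds on $A^\varepsilon,B^\varepsilon$ together with $\partial_t\F^\varepsilon$ controlled in $L^2_{loc}(H^{-1})$ from the transport equation supply the strong $L^2_{loc}$ compactness of $\F^\varepsilon$ needed to identify $\F\F^\top$ distributionally and pass to the limit in \eqref{e2}--\eqref{e3}; smoothness for $t>0$ then comes from standard bootstrapping of the higher-order estimates available once $\sigma(t)>0$.
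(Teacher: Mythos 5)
Your proposal captures the overall architecture correctly: biharmonic regularization, the basic energy law, $\sigma$-weighted tests against $\dot\u$ and $\mathcal{P}\dot\u$, the effective viscous flux $\mathcal{G}$ with $\D\mathcal{G}=\na\mathcal{P}\dot\u$, the auxiliary weighted $L^2$ and $L^p$ propagations of $\u$, and a continuity argument to close the bootstrap. These all match the paper. However, two steps as you describe them have genuine gaps.

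First, the pointwise bound on $\F-I$ (Lemma 4.1) is not obtained in the paper by a log-Sobolev plus Osgood argument, and it is not clear that route closes. The issue is that any Brezis--Gallouet-type inequality requires an $H^s$ norm of $\F\F^\top-I$ with $s>1$ inside the logarithm. What the effective viscous flux actually yields (from \eqref{33} together with $\Dv\F^\top=0$) is $\|\F-I\|_{\dot H^2}\lesssim\|\na\mathcal{P}\dot\u\|_{L^2}$, and $\na\mathcal{P}\dot\u$ is controlled only in $L^2_t$ with the $\sigma^2$ weight, i.e., it behaves like $t^{-1}$ as $t\to0$. Thus the quantity inside the logarithm blows up at the initial layer, and the time integral of the Osgood-type right-hand side does not obviously converge. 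The paper circumvents this entirely: it decomposes $\mathfrak{G}\F=(\nabla\u+\F-I)\F$ via the fundamental solution $\Gamma$ of the Laplacian, pulls the kernel back through the Lagrangian flow map using the identity $\partial_{X_j}\Gamma\star\u_i=(\Gamma_m\star\u_i)\F_{mj}$, and writes the most singular piece as an exact time derivative $\f{d}{ds}\left(\int\na_{X_j}\Gamma(x(s,X)-y)\u_i(y,s)\,dy\right)$ plus commutator remainders that are then estimated using the H\"older seminorm of $\u$ (Lemma 3.4), the weighted $L^2$ bound (Lemma 3.2) and the $L^4$ bound (Lemma 3.3). This is a qualitatively different mechanism from a logarithmic Gronwall and avoids any singular time factor.

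Second, your compactness step does not supply what is needed. You claim that the uniform bounds on $A$, $B$ together with $\partial_t\F^n$ controlled in $L^2_{loc}(H^{-1})$ give strong $L^2_{loc}$ convergence of $\F^n$. But $\F^n-I$ is only uniformly bounded in $L^\infty_t L^2_x$; there is no uniform \emph{spatial} regularity estimate on $\F^n$ ($H^2$ control comes only with a $\sigma$ weight and only in $L^2_t$), so Aubin--Lions with a time-derivative bound in $H^{-1}$ yields precompactness in $C([0,T];H^{-s}_{loc})$ for $s>0$, not in $L^2_{loc}$. Consequently the product $\F^n(\F^n)^\top$ cannot be passed to the limit this way. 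The paper's Section 5 addresses precisely this: it proves strong $L^2$ convergence of $\F^n(t)-I$ by comparing the energy balance for $(\u^n,\F^n)$ with that for the weak limit $(\u,\ov{\F\F^\top-I})$, exploiting the symmetric factorization $\nabla\u+(\nabla\u)^\top=\mathcal{S}\mathcal{S}^\top$ and a Minty-type convexity argument to conclude $\ov{|\F-I|^2}=|\F-I|^2$ a.e. This convexity step is the key device that replaces the missing compact embedding, and it is absent from your proposal.
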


\begin{Remark}
In \cite{PC2}, authors verified that the incompressible Navier-Stokes equations in dimension two forced by the divergence of a bounded stress have unique weak solutions, and in particular the weak solution is Holder continuous after an initial transient time.  A new ingredient in our current work is to derive the $L^\infty$ bound for the elastic stress via the trajectory. During the initial transient time, the Holder norms of the velocity will be compensated by the weight $\sigma(t)$.
\end{Remark}

\begin{Remark}
We shall prove the above theorem under the assumption that $p=4$ for saving some notations. It will be clear from the proofs presented below that the general case with $p>2$ follows exactly in the same manner. It should be also clear the similar proofs work also in dimension three. In the latter case, one needs to assume that the initial velocity to be small in $L^3$ and bounded in $L^p$ for some $p>3$. We note that the smallness of $L^3$ norm of the velocity is almost necessary even for the Navier-Stokes equations in dimension three. We believe that such a global existence theorem is also true when the initial data is small in a suitable Besov-space or a Lorentz space. For example, for the above theorem to be true in  dimension two, one just need that the velocity is small in the Lorentz space $L^{2,1}$. But we do not prove the latter result in this paper partially because it would make the article much more technical and longer. One may also conjecture that the above theorem is true when the velocity is small in $L^2$. The latter would require additional new ideas.
\end{Remark}

Theorem \ref{mt} will be established by passing to the limit as $n\rightarrow\infty$ of a sequence of approximating solutions $(\u^n, \F^n)$ which are global solutions of a modified
system \eqref{e1} with a biharmonic regularization for the velocity, $(-\D)^2\u$. This approximation process turns out to be rather convenient for various estimates. Our analysis require us to derive a great deal of technical and qualitative information about the structure of these regular flows first. A crucial step would be to find a structural mechanism  in the solution operator which enforces appropriate pointwise
bounds on the deformation gradient $\F$ in the absence of information concerning $\nabla\u$. We believe that it is both physically significant and mathematically necessary for understanding, in incompressible viscoelastic flows, which quantities are smoothed out in the flows, which are not. One of the important observations in this paper is that the " effective viscous flus", which is a sum of gradient of the velocity with a suitable quantity related to the elastic stress, is in fact continuous for positive time.

The rest of this paper is organized as follows. In Section 2, we apply standard energy estimates to derive a bound
for $A(t)$ (see Lemma 2.1). In Section 3, we introduce the \textit{effective viscous flux} and estimate $A(t)$ in terms of $\varepsilon_0$ and $B$ (see Lemma 3.1). In Section 4, we derive the pointwise bounds for $\F$, and hence obtain a bound for $B$ in terms of $A$ (Lemma 4.1). The main result, Theorem 1.1, is proved in section 5. Throughout this paper, $M$ will denote a generic positive constant which may depend on $\varepsilon_0$ and other constants.

\bigskip\bigskip


\section{Basic Energy Estimate}
In this section we derive certain \textit{a priori} energy estimates for sufficiently smooth solutions of \eqref{e1}. To begin with, let $(\u, \F)$ be a sufficiently smooth solution of \eqref{e1} which is defined up to a positive time $T$ and which satisfies the pointwise bounds
$$|\F(x,t)-I|\le \f12.$$ Let $\varepsilon_0$ be as in \eqref{C_0}, and we assume that $\varepsilon_0$, $A(T)$, $B(T)\le 1$.

The bound of $A(T)$ can be stated as
\begin{Lemma}\label{l1}
$$A(T)\le M\left(\varepsilon_0+\int_0^T\int_{\R^2}\sigma^2|\nabla\u|^4dxdt\right).$$
\end{Lemma}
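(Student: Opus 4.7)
The plan is to squeeze out three successive layers of regularity from a smooth solution by testing the momentum equation against progressively refined multipliers: first $\u$, then $\sigma\dot\u$, and finally $\sigma^2\mathcal{P}\dot\u$. The cubic gradient term $\int_0^T\int\sigma^2|\nabla\u|^4$ survives every step and is left on the right-hand side; its control requires the effective viscous flux $\mathcal{G}$ of Section 3 and so is postponed.

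\textbf{First layer (basic energy).} Set $G=\F-I$. Test the momentum equation against $\u$ and, componentwise, test $\partial_t G+\u\cdot\nabla G=(\nabla\u)(I+G)$ against $G$. The identity $\Dv(\F^\top)=0$ from \eqref{c} makes $\int G^\top:\nabla\u=0$ after integration by parts; the linear pieces $\pm\int G:\nabla\u$ appearing in the two estimates cancel; and a short index calculation shows that the two quadratic contributions $\int GG^\top:\nabla\u$ and $\int(\nabla\u\,G):G$ are equal and therefore cancel as well. This produces the clean identity
\begin{equation*}
\frac{d}{dt}\int_{\R^2}\frac{|\u|^2+|G|^2}{2}\,dx+\|\nabla\u\|_{L^2}^2=0,
\end{equation*}
which, after integrating in time, yields $\sup_{t\le T}(\|\u\|_{L^2}^2+\|G\|_{L^2}^2)+2\int_0^T\|\nabla\u\|_{L^2}^2\,dt\le\varepsilon_0$.

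\textbf{Second layer ($\sigma\|\nabla\u\|^2$ and $\int\sigma|\dot\u|^2$).} Test the momentum equation against $\sigma\dot\u$. The viscous term contributes $\frac{d}{dt}\!\int\frac{\sigma}{2}|\nabla\u|^2-\!\int\frac{\sigma'}{2}|\nabla\u|^2$ plus a cubic remainder $\int\sigma\,\partial_k u_i\,\partial_k u_j\,\partial_j u_i$, while the pressure term, via $\Dv\dot\u=\partial_j u_k\partial_k u_j$, reduces to $-\!\int\sigma P\,\partial_j u_k\partial_k u_j$. For the elastic source $\int\sigma\,\Dv(\F\F^\top-I)\cdot\dot\u$, split $\dot\u=\partial_t\u+\u\cdot\nabla\u$: in the $\partial_t\u$ piece, integrate by parts in $t$ so that the resulting $\partial_t(\F\F^\top)$ is replaced by terms quadratic in $\nabla\u$ and $\F$ via the $\F$-equation; the $\u\cdot\nabla\u$ piece is dominated by $M\int\sigma|G||\nabla\u|^2$ using $|G|\le\frac{1}{2}$. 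The cubic $\int\sigma|\nabla\u|^3$ remainder is handled by Cauchy--Schwarz against $\sigma^{1/2}|\nabla\u|^2$, producing, after Young and absorption of $\sigma\|\dot\u\|^2$ into the left-hand side, the desired bound up to $M\varepsilon_0+M\int_0^T\!\!\int\sigma^2|\nabla\u|^4$.

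\textbf{Third layer and principal obstacle.} Apply the material derivative $\partial_t+\u\cdot\nabla$ to the momentum equation and then the Leray projector $\mathcal{P}$ to eliminate the pressure, obtaining the schematic transport--diffusion equation
\begin{equation*}
\partial_t\mathcal{P}\dot\u+\u\cdot\nabla\mathcal{P}\dot\u-\D\mathcal{P}\dot\u=\mathcal{P}\bigl[(\partial_t+\u\cdot\nabla)\Dv(\F\F^\top-I)\bigr]+\mathcal{P}\bigl[\text{commutators in }\nabla\u\bigr].
\end{equation*}
Testing against $\sigma^2\mathcal{P}\dot\u$ and integrating produces $\sup_{t\le T}\sigma^2\|\mathcal{P}\dot\u\|_{L^2}^2+\int_0^T\sigma^2\|\nabla\mathcal{P}\dot\u\|_{L^2}^2\,dt$ on the left. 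The commutators are schematically of the shape $\nabla\u\,\nabla\u\,\nabla\mathcal{P}\dot\u$, which H\"older and Young convert into $\int_0^T\!\!\int\sigma^2|\nabla\u|^4$ on the right plus an absorbable piece. The elastic source, after commuting $\partial_t+\u\cdot\nabla$ through $\Dv$ and using the $\F$-equation, reduces to expressions of type $(\nabla\u)(\F\F^\top-I)$ and $(\nabla\u)(\nabla\u)\F$, controlled by $\|G\|_{L^\infty}\le\frac{1}{2}$ together with the second-layer bounds. The principal difficulty throughout is this elastic coupling: because only $\|G\|_{L^\infty}$ and $\|G\|_{L^2}$ are controlled a priori, derivatives of $\F$ must always be rerouted onto $\u$ via the $\F$-equation, and the structural constraints $\Dv(\F^\top)=0$, $\det\F=1$, and \eqref{c1} are precisely what enable the algebraic cancellations needed to close each layer.
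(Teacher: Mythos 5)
Your plan follows the same three-layer energy hierarchy as the paper: test against $\u$, then $\sigma\dot\u$, then $\sigma^2\mathcal{P}\dot\u$, and postpone the cubic gradient term to Section 3. The first layer is worked out correctly; the index cancellations $\int G^\top:\nabla\u=0$ and $\int GG^\top:\nabla\u=\int(\nabla\u\,G):G$ are exactly the content of the paper's Step One.

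The genuine gap is in the second layer. You write down the pressure contribution $-\int_0^t\int\sigma P\,\partial_j u_k\partial_k u_j\,dx\,ds$ and then leave it unestimated. The part of $P$ coming from the elastic stress, $(-\D)^{-1}\Dv\Dv(\F\F^\top-I)$, is only in BMO (it is a Riesz transform of an $L^\infty$ function), not $L^2$ or $L^\infty$, so Cauchy--Schwarz or Young does not close the estimate. What makes it work is the compensated-compactness input from \cite{LIONS}: $\partial_j u_k\partial_k u_j$ lies in the Hardy space $\mathcal{H}^1$ with $\|\partial_j u_k\partial_k u_j\|_{\mathcal{H}^1}\le M\|\nabla\u\|_{L^2}^2$, and one pairs it against BMO. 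The paper packages this by applying $\mathcal{P}$ to the momentum equation \emph{before} testing, which converts the pressure into the terms $I_3=\int\sigma\dot\u\cdot\mathcal{Q}(\u\cdot\nabla\u)$ and $I_{2_2}=-\int\sigma\Dv(\F\F^\top)\cdot\mathcal{Q}(\u\cdot\nabla\u)$; $I_{2_2}$ is bounded precisely by the Hardy--BMO duality. Your algebra is equivalent (splitting $P$ into its $\u\otimes\u$ and $\F\F^\top$ pieces reproduces $I_3$ and $I_{2_2}$), but without naming and invoking this tool the second layer does not close. The same mechanism is silently needed again in the third layer for the term the paper calls $J_{3_2}$, involving $\mathcal{Q}\Dv(\F\F^\top)\otimes\u$.

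A smaller imprecision: the claim that the $\u\cdot\nabla\u$ piece of the elastic source is ``dominated by $M\int\sigma|G||\nabla\u|^2$'' is not correct as stated, since $\nabla(\u\cdot\nabla\u)$ contains the second-derivative term $\u\cdot\nabla\nabla\u$. The latter must be integrated by parts a second time and combined with the transport term $-\u\cdot\nabla(\F\F^\top)$ produced by the $\F$-equation in the $\partial_t\u$ piece; only after that combination (the paper's computation for $I_{2_1}$) does everything reduce to $M\int\sigma|\nabla\u|^2$.
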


\begin{proof}
The proof consists of three separate energy-type estimates.

{\bf Step One:} The first step is the energy-balance law. To
derive it, we multiply the first equation and the second equation
in \eqref{e1} by $\u$ and $\F$ respectively, and then sum them
together to obtain
\begin{equation}\label{21}
\f12\partial_t(|\u|^2+|\F|^2)+\f12\u\cdot\nabla(|\u|^2+|\F|^2)-\sum_{i=1}^2\Dv(\nabla\u_i\u_i)+|\nabla\u|^2+\Dv(P\u)=\partial_{x_j}(\F_{ik}\F_{jk}\u_i).
\end{equation}
Here $|D|=\left(\sum_{i,j=1}^2D_{ij}^2\right)^{\f12}$ for any $2\times 2$ matrix $D$.
On the other hand, we deduce from the second equation in
\eqref{e1} by taking the trace of the matrix
\begin{equation}\label{22}
\partial_t\textrm{tr}\F+\u\cdot\nabla\textrm{tr}\F=\textrm{tr}(\nabla\u\F).
\end{equation}
Integrating \eqref{21} and \eqref{22} over $\R^2$ and using the facts
$\Dv(\F^\top)=\Dv\u=0$, one has
\begin{equation*}
\f{1}{2}\f{d}{dt}\int_{\R^2}(|\u|^2+|\F-I|^2)dx+\int_{\R^2}|\nabla\u|^2dx=0,
\end{equation*}
and hence
\begin{equation}\label{23}
\sup_{0\le t\le
T}\int_{\R^2}(|\u|^2+|\F-I|^2)dx+2\int_0^T\int_{\R^2}|\nabla\u|^2dxdt=\int_{\R^2}(|\u_0|^2+|\F_0-I|^2)dx\le
\varepsilon_0.
\end{equation}

{\bf Step Two:} We derive estimates for the terms
$\sigma\int_{\R^2}|\nabla\u|^2dx$ and
$\int_0^T\int_{\R^2}\sigma|\dot\u|^2dxdt$ appearing in the
definition of $A$. Applying the operator $\mathcal{P}$ to the first equation in \eqref{e1}, and taking $L^2$ inner product of the resulting equation with
$\sigma\dot\u$, we obtain
\begin{equation}\label{24}
\begin{split}
\int_0^t\int_{\R^2}\sigma|\dot\u|^2dxds&=\int_0^t\int_{\R^2}\left(\D\u+\mathcal{P}\Dv(\F\F^\top)\right)\cdot\sigma\dot\u dxds\\
&\quad+\int_0^t\int_{\R^2}\mathcal{Q}(\u\cdot\nabla\u)\cdot\sigma\dot\u dxds\\
&=I_1+I_2+I_3,
\end{split}
\end{equation}
where $\mathcal{Q}=Id-\mathcal{P}$.
For $I_1$, we have
\begin{equation*}
\begin{split}
I_1=\int_0^t\int_{\R^2}\sigma(\u_t+\u\cdot\nabla\u)\cdot\D\u
dxds&=-\f{\sigma}{2}\int_{\R^2}|\nabla\u|^2dx+\f{1}{2}\int_0^{\min\{1,t\}}\int_{\R^2}|\nabla\u|^2dx+\mathcal{O}_3,
\end{split}
\end{equation*}
where $\mathcal{O}_3$ denotes a finite sum of terms of the form
$\left|\int_0^t\int_{\R^2}\sigma\u^i_j\u^k_l\u^m_n dxds\right|$.

We can split $I_2$ as
\begin{equation*}
 \begin{split}
I_2&=\int_0^t\int_{\R^2}\Dv(\F\F^\top)\cdot\sigma\dot\u dxds-\int_0^t\int_{\R^2}\Dv(\F\F^\top)\cdot\sigma\mathcal{Q}(\u\cdot\nabla\u)dxds\\
&=I_{2_1}+I_{2_2}.
 \end{split}
\end{equation*}
For $I_{2_1}$, we have, using $\Dv(\F^\top)=0$,
\begin{equation}\label{25}
\begin{split}
I_{2_1}&=\int_0^t\int_{\R^2}\Dv(\F\F^\top)\cdot\sigma\dot\u dxds\\
&=\int_0^t\int_{\R^2}\Dv((\F-I)(\F-I)^\top)\cdot(\sigma\dot\u)dxds+\int_0^t\int_{\R^2}\Dv(\F-I)\cdot(\sigma\dot\u)dxds\\
&=-\int_0^t\int_{\R^2}(\F-I)(\F-I)^\top:(\sigma
\nabla\u_t+\sigma\nabla(\u\cdot\nabla\u))dxds\\
&\quad-\int_0^t\int_{\R^2}(\F-I):(\sigma\nabla\u_t+\sigma\nabla(\u\cdot\nabla\u)))dxds\\
&=-\int_{\R^2}\sigma(\F-I)(\F-I)^\top:\nabla\u
dx+\int_0^t\int_{\R^2}\sigma_t(\F-I)(\F-I)^\top:\nabla\u
dxds\\
&\quad+\int_0^t\int_{\R^2}\sigma\left((\F-I)(\F-I)^\top\right)_t:\nabla\u
dxdt-\int_0^t\int_{\R^2}(\F-I)(\F-I)^\top:\sigma\nabla(\u\cdot\nabla\u)dxds\\
&\quad-\int_{\R^2}\sigma(\F-I):\nabla\u
dx+\int_0^t\int_{\R^2}\sigma_t(\F-I):\nabla\u
dxds+\int_0^t\int_{\R^2}\sigma(\F-I)_t:\nabla\u
dxds\\
&\quad-\int_0^t\int_{\R^2}(\F-I):\sigma\nabla(\u\cdot\nabla\u))dxds.
\end{split}
\end{equation}
Note that for the last two terms on the right hand side of the last part of the equation
\eqref{25}, using the second equation of \eqref{e1} and $\Dv\u=0$, we have
\begin{equation*}
\begin{split}
&\int_0^t\int_{\R^2}\sigma(\F-I)_t:\nabla\u
dxds-\int_0^t\int_{\R^2}(\F-I):\sigma\nabla(\u\cdot\nabla\u))dxds\\
&\quad=\int_0^t\int_{\R^2}\sigma(-\u\cdot\nabla(\F-I)+\nabla\u\F):\nabla\u
dxds-\int_0^t\int_{\R^2}(\F-I):\sigma\nabla(\u\cdot\nabla\u))dxds\\
&\quad=-\int_0^t\int_{\R^2}(\F-I):\sigma\nabla\u\nabla\u dxds+\int_0^t\int_{\R^2}\sigma\nabla\u\F:\nabla\u dxds\\
&\quad\le M\int_0^t\int_{\R^2}|\nabla\u|^2dxds.
\end{split}
\end{equation*}
Similarly, for the third term and the fourth term in the
right most hand side of \eqref{25}, we have
\begin{equation*}
\begin{split}
&\int_0^t\int_{\R^2}\sigma\left((\F-I)(\F-I)^\top\right)_t:\nabla\u
dxdt-\int_0^t\int_{\R^2}(\F-I)(\F-I)^\top:\sigma\nabla(\u\cdot\nabla\u)dxds\\
&\quad=-\int_0^t\int_{\R^2}\sigma(\F-I)(\F-I)^\top:\nabla\u\nabla\u
dxds\\
&\qquad+\int_0^t\int_{\R^2}\sigma\left(\nabla\u\F(\F-I)^\top+(\F-I)(\nabla\u\F)^\top\right):\nabla\u
dxds\\
&\quad\le  M\int_0^t\int_{\R^2}|\nabla\u|^2dxds,
\end{split}
\end{equation*}
Combining these two estimates, we thus conclude
\begin{equation*}
 |I_{2_1}|\le M\Big(\sigma\int_{\R^2}|\nabla\u||\F-I|dx+\int_0^{\min\{1,t\}}\int_{\R^2}|\nabla\u||\F-I|dxds+\int_0^t\int_{\R^2}|\nabla\u|^2dxds\Big).
\end{equation*}
For $I_{2_2}$, as the Riesz operator $\nabla\nabla(-\D)^{-1}$ is bounded on the Hardy space $\mathcal{H}^1$, we have
\begin{equation*}
 \begin{split}
I_{2_2}&=\int_0^t\int_{\R^2}\sigma\F\F^\top:\nabla\mathcal{Q}(\u\cdot\nabla\u)dxds\\
&\le M\int_0^t\|\F\F^\top\|_{BMO}\|\nabla\mathcal{Q}(\u\cdot\nabla\u)\|_{\mathcal{H}^1}ds\\
&\le M\int_0^t\|\F\F^\top\|_{L^\infty}\|\nabla\nabla(-\D)^{-1}(\partial_j\u^i\partial_i\u^j)\|_{\mathcal{H}^1}ds\\
&\le M\int_0^t\|\partial_j\u^i\partial_i\u^j\|_{\mathcal{H}^1}ds\\
&\le M\int_0^t\|\nabla\u\|_{L^2}^2ds.
 \end{split}
\end{equation*}
Here, in the last inequality, we have used the following well-known estimate in \cite{LIONS}: if $\Dv v=0$, then
$$v\cdot\nabla w\in\mathcal{H}^1\quad\textrm{and}\quad \|v\cdot\nabla w\|_{\mathcal{H}^1}\le M\|v\|_{L^2}\|\nabla w\|_{L^2}.$$

Finanly for $I_3$, we estimate as follows
\begin{equation}
 \begin{split}
 I_3&=\int_0^t\int_{\R^2}\nabla(-\D)^{-1}\Dv\Dv(\u\otimes\u)\cdot(\sigma\dot\u)dxds\\
&=-\int_0^t\int_{\R^2}(-\D)^{-1}\Dv\Dv(\u\otimes\u)\cdot\sigma\Dv\Dv(\u\otimes\u)dxds\\
&\le M\int_0^t\sigma\|\Dv(\u\otimes\u)\|_{L^2}^2ds\\
&\le M\int_0^t\int_{\R^2}(|\u|^4+\sigma^2|\nabla\u|^4)dxds\\
&\le M\Big(\varepsilon_0+\int_0^t\int_{\R^2}\sigma^2|\nabla\u|^4dxds\Big).
 \end{split}
\end{equation}
Note that one has also used ,for all $t\in [0,T]$, the following inequality:
$$\int_0^t\int_{\R^2}|\u(t)|^4dxds\le \sup_{s\in[0,t]}\|\u(s)\|_{L^2}^2\int_0^t\|\nabla\u(s)\|_{L^2}^2ds\le \varepsilon_0^2\le \varepsilon_0.$$

Combining all the estimates for $I_1$, $I_2$ and $I_3$ together, we have
\begin{equation*}
\begin{split}
&\f{\sigma}{2}\int_{\R^2}|\nabla\u|^2dx+\int_0^t\int_{\R^2}\sigma|\dot\u|^2dxds\\
&\quad\le M\Big(\sigma\int_{R^2}|\nabla\u|
|\F-I|dx+\int_0^t\int_{\R^2}\sigma^2|\nabla\u|^4dxds+\varepsilon_0\\
&\qquad+\int_0^{\min\{1,t\}}\int_{\R^2}|\nabla\u||\F-I|dxds+\int_0^t\int_{\R^2}|\nabla\u|^2dxds+\mathcal{O}_3\Big).
\end{split}
\end{equation*}
The latter, together with \eqref{23} and Young's inequality, yields
\begin{equation}\label{26a}
\begin{split}
&\sup_{0\le t\le
T}\left(\sigma\int_{\R^2}|\nabla\u|^2dx\right)+\int_0^T\int_{\R^2}\sigma|\dot\u|^2dxdt\\
&\quad\le
M\left(\varepsilon_0+\int_0^t\int_{\R^2}\sigma^2|\nabla\u|^4dxds+\sum\left|\int_0^T\int_{\R^2}\sigma\u^i_j\u^k_l\u^m_n
dxdt\right|\right).
\end{split}
\end{equation}
Since
\begin{equation*}
 \begin{split}
\left|\int_0^T\int_{\R^2}\sigma \u^i_j\u^k_l\u^m_n dxds\right|&\le \int_0^T\int_{\R^2}\sigma|\nabla\u|^3dxdt\\
 &\le \left(\int_0^T\int_{\R^2}|\nabla\u|^2dxdt\right)^{\f12}\left(\int_0^T\int_{\R^2}\sigma^2|\nabla\u|^4dxdt\right)^{\f12}\\
&\le M\left(\varepsilon_0+\int_0^T\int_{\R^2}\sigma^2|\nabla\u|^4dxdt\right),
 \end{split}
\end{equation*}
one deduces from \eqref{26a} that
\begin{equation}\label{26}
\begin{split}
\sup_{0\le t\le
T}\left(\sigma\int_{\R^2}|\nabla\u|^2dx\right)+\int_0^T\int_{\R^2}\sigma|\dot\u|^2dxdt\le
M\left(\varepsilon_0+\int_0^t\int_{\R^2}\sigma^2|\nabla\u|^4dxds\right).
\end{split}
\end{equation}

{\bf Step Three:} We estimate the terms
$\sigma^2\int_{\R^2}|\mathcal{P}\dot\u|^2dx$ and
$\int_0^t\int_{\R^2}\sigma^2|\nabla\mathcal{P}\dot\u|^2dxds$ appearing in
the definition of $A$. First of all, we apply the operator $\mathcal{P}$ to the first equation in \eqref{e1} to obtain
$$\mathcal{P}\dot\u-\D\u=\mathcal{P}\Dv(\F\F^\top).$$
Applying the operator
$\partial_t+\u\cdot\nabla$ to the above equation,  one has
\begin{equation}\label{2611}
\partial_t(\mathcal{P}\dot\u)+\u\cdot\nabla\mathcal{P}\dot\u=\D\u_t+\Dv(\D\u\otimes\u)+(\mathcal{P}\Dv(\F\F^\top))_t+\Dv(\mathcal{P}\Dv(\F\F^\top)\otimes\u).
\end{equation}
Multiplying \eqref{2611}
by $\sigma^2\mathcal{P}\dot\u$ and integrating over $\R^2\times(0,t)$, we
obtain
\begin{equation}\label{27}
\begin{split}
\f{1}{2}\sigma^2\int_{\R^2}|\mathcal{P}\dot\u|^2dx&=\int_0^t\int_{\R^2}\sigma
\sigma'|\mathcal{P}\dot\u|^2dxds+\int_0^t\int_{\R^2}\sigma^2\mathcal{P}\dot\u\cdot\Big(\D\u_t+\Dv(\D\u\otimes\u)\Big)dxds\\
&\quad+\int_0^t\int_{\R^2}\sigma^2\mathcal{P}\dot\u\cdot\Big((\mathcal{P}\Dv(\F\F^\top))_t+\Dv(\mathcal{P}\Dv(\F\F^\top)\otimes\u)\Big)dxds\\
&=\sum_{j=1}^3J_j.
\end{split}
\end{equation}

The estimate \eqref{26} can be used to control the first term on
the right hand side of the above equation since $|\sigma'|\le 1$ and
\begin{equation*}
 \begin{split}
|J_1|&=\left|\int_0^t\int_{\R^2}\sigma
\sigma'|\mathcal{P}\dot\u|^2dxds\right|\\
&\le\int_0^t\int_{\R^2}\sigma|\dot\u|^2dxds.
 \end{split}
\end{equation*}

The second term $J_2$ on the right hand side of \eqref{27} can be written as
\begin{equation}\label{2711}
\begin{split}
J_2&=\int_0^t\int_{\R^2}\sigma^2\mathcal{P}\dot\u\cdot\Big(\D\u_t+\Dv(\D\u\otimes\u)\Big)dxds\\
&=-\int_0^t\int_{\R^2}\sigma^2\left(\nabla\mathcal{P}\dot\u:\nabla\u_t+\nabla\mathcal{P}\dot\u:\D\u\otimes\dot\u\right)dxds\\
&=-\int_0^t\int_{\R^2}\sigma^2\left(\nabla\mathcal{P}\dot\u:(\nabla\u_t+\nabla(\u\cdot\nabla\u))+\nabla\mathcal{P}\dot\u:(\D\u\otimes\u-\nabla(\u\cdot\nabla\u))\right)dxds.
\end{split}
\end{equation}
Note that
\begin{equation*}
 \begin{split}
\int_0^t\int_{\R^2}\sigma^2\nabla\mathcal{P}\dot\u:(\nabla\u_t+\nabla(\u\cdot\nabla\u))dxds&=\int_0^t\int_{\R^2}\sigma^2\nabla\mathcal{P}\dot\u:\nabla\dot\u dxds\\
&=\int_0^t\int_{\R^2}\sigma^2|\nabla\mathcal{P}\dot\u|^2dxds
 \end{split}
\end{equation*}
and that integration by parts gives
\begin{equation*}
 \begin{split}
  &\int_0^t\int_{\R^2}\sigma^2\nabla\mathcal{P}\dot\u:(\D\u\otimes\u-\nabla(\u\cdot\nabla\u))dxds\\
&\quad=-\int_0^t\int_{\R^2}\sigma^2\Big((\u\cdot\nabla\partial_{x_l}\mathcal{P}\dot\u)\cdot\partial_{x_l}\u+(\partial_{x_l}\u\cdot\nabla\mathcal{P}\dot\u)\cdot\partial_{x_l}\u\Big)dxds\\
&\quad\quad-\int_0^t\int_{\R^2}\sigma^2\Big(\partial_{x_l}\mathcal{P}\dot\u\cdot(\partial_{x_l}\u\cdot\nabla\u)+\u\cdot\nabla\partial_{x_l}\u\cdot\partial_{x_l}\mathcal{P}\dot\u\Big)dxds\\
&\quad=-\int_0^t\int_{\R^2}\sigma^2\Big((\partial_{x_l}\u\cdot\nabla\mathcal{P}\dot\u)\cdot\partial_{x_l}\u+\partial_{x_l}\mathcal{P}\dot\u\cdot(\partial_{x_l}\u\cdot\nabla\u)\Big)dxds,
 \end{split}
\end{equation*}
here one uses
$$\int_{\R^2}\Big((\u\cdot\nabla\partial_{x_l}\mathcal{P}\dot\u)\cdot\partial_{x_l}\u+\u\cdot\nabla\partial_{x_l}\u\cdot\partial_{x_l}\mathcal{P}\dot\u\Big)dxds=0$$
as $\Dv\u=0$.
Therefore, we deduce from \eqref{2711} that
$$J_2=-\int_0^t\int_{\R^2}\sigma^2|\nabla\mathcal{P}\dot\u|^2dxds+\mathcal{O}_4,$$
where $\mathcal{O}_4$ denotes any term dominated by
$M\int_0^t\int_{\R^2}\sigma^2|\nabla\u|^2|\nabla\mathcal{P}\dot\u|dxds$.

The third term $J_3$ on the right hand side of \eqref{27} can
be written as
\begin{equation}\label{2714}
\begin{split}
J_3&=\int_0^t\int_{\R^2}\sigma^2\mathcal{P}\dot\u\cdot\Big((\mathcal{P}\Dv(\F\F^\top))_t+\Dv(\mathcal{P}\Dv(\F\F^\top)\otimes\u)\Big)dxds\\
&=\int_0^t\int_{\R^2}\sigma^2\mathcal{P}\dot\u\cdot\Big(\mathcal{P}\Dv((\F\F^\top)_t)+\Dv(\mathcal{P}\Dv(\F\F^\top)\otimes\u)\Big)dxds\\
&=\int_0^t\int_{\R^2}\sigma^2\mathcal{P}\dot\u\cdot\Big(\Dv((\F\F^\top)_t)+\Dv(\mathcal{P}\Dv(\F\F^\top)\otimes\u)\Big)dxds\\
&=-\int_0^t\int_{\R^2}\sigma^2\nabla\mathcal{P}\dot\u:\Big((\F\F^\top)_t+\mathcal{P}\Dv(\F\F^\top)\otimes\u\Big)dxds\\
&=-\int_0^t\int_{\R^2}\sigma^2\nabla\mathcal{P}\dot\u:\Big((\F\F^\top)_t+\Dv(\F\F^\top)\otimes\u\Big)dxds\\
&\quad+\int_0^t\int_{\R^2}\sigma^2\nabla\mathcal{P}\dot\u:\mathcal{Q}\Dv(\F\F^\top)\otimes\u dxds\\
&=J_{3_1}+J_{3_2}.
\end{split}
\end{equation}

From the second equation in \eqref{e1}, one has
\begin{equation*}
 \begin{split}
  \partial_t(\F\F^\top)+\u\cdot\nabla(\F\F^\top)
&=\nabla\u\F\F^\top+\F\F^\top(\nabla\u)^\top.
 \end{split}
\end{equation*}
Therefore, we can write $J_{3_1}$ as, using again integration by parts
\begin{equation*}
 \begin{split}
J_{3_1}&=-\int_0^t\int_{\R^2}\sigma^2\nabla\mathcal{P}\dot\u:\Big((\F\F^\top)_t+\Dv(\F\F^\top)\otimes\u\Big)dxds\\
&=-\int_0^t\int_{\R^2}\sigma^2\nabla\mathcal{P}\dot\u:\Big(-\u\cdot\nabla(\F\F^\top)+\Dv(\F\F^\top)\otimes\u\Big)dxds\\
&\quad-\int_0^t\int_{\R^2}\sigma^2\nabla\mathcal{P}\dot\u:\Big(\nabla\u\F\F^\top+\F\F^\top(\nabla\u)^\top\Big)dxds\\
&=\int_0^t\int_{\R^2}\sigma^2\partial_{x_jx_k}(\mathcal{P}\dot\u)_i\Big(-\u_k(\F\F^\top)_{ij}+(\F\F^\top)_{ik}\u_j\Big)dxds\\
&\quad+\int_0^t\int_{\R^2}\sigma^2\partial_{x_j}(\mathcal{P}\dot\u)_i(\F\F^\top)_{ik}\partial_{x_k}\u_jdxds\\
&\quad-\int_0^t\int_{\R^2}\sigma^2\nabla\mathcal{P}\dot\u:\Big(\nabla\u\F\F^\top+\F\F^\top(\nabla\u)^\top\Big)dxds\\
&=\int_0^t\int_{\R^2}\sigma^2\partial_{x_j}(\mathcal{P}\dot\u)_i(\F\F^\top)_{ik}\partial_{x_k}\u_jdxds\\
&\quad-\int_0^t\int_{\R^2}\sigma^2\nabla\mathcal{P}\dot\u:\Big(\nabla\u\F\F^\top+\F\F^\top(\nabla\u)^\top\Big)dxds
 \end{split}
\end{equation*}
since interchanging $j$ and $k$ leads to
$$\int_0^t\int_{\R^2}\sigma^2\partial_{x_jx_k}(\mathcal{P}\dot\u)_i\Big(-\u_k(\F\F^\top)_{ij}+(\F\F^\top)_{ik}\u_j\Big)dxds=0.$$
Thus, one concludes
\begin{equation}\label{2712}
 \begin{split}
|J_{3_1}|&\le M \left(\int_0^t\int_{\R^2}|\nabla\u|^2dxds\right)^{\f12}\left(\int_0^t\int_{\R^2}\sigma^2|\nabla\mathcal{P}\dot\u|^2dxds\right)^{\f12}\\
&\le M\varepsilon_0^{\f12}\left(\int_0^t\int_{\R^2}\sigma^2|\nabla\mathcal{P}\dot\u|^2dxds\right)^{\f12}.
 \end{split}
\end{equation}

On the other hand, one can rewrite $J_{3_2}$ as
\begin{equation}\label{2713}
 \begin{split}
J_{3_2}&=\int_0^t\int_{\R^2}\sigma^2\nabla\mathcal{P}\dot\u:\mathcal{Q}\Dv(\F\F^\top)\otimes\u dxds\\
&=\int_0^t\int_{\R^2}\sigma^2\nabla\mathcal{P}\dot\u:\nabla(-\D)^{-1}\Dv\Dv(\F\F^\top)\otimes\u dxds\\
&=\int_0^t\int_{\R^2}\sigma^2\partial_{x_j}(\mathcal{P}\dot\u)_i\partial_{x_i}(-\D)^{-1}\Dv\Dv(\F\F^\top)\u_j dxds\\
&=-\int_0^t\int_{\R^2}\sigma^2\partial_{x_j}\partial_{x_i}(\mathcal{P}\dot\u)_i(-\D)^{-1}\Dv\Dv(\F\F^\top)\u_j dxds\\
&\quad-\int_0^t\int_{\R^2}\sigma^2\partial_{x_j}(\mathcal{P}\dot\u)_i(-\D)^{-1}\Dv\Dv(\F\F^\top)\partial_{x_i}\u_j dxds\\
&=-\int_0^t\int_{\R^2}\sigma^2\partial_{x_j}(\mathcal{P}\dot\u)_i(-\D)^{-1}\Dv\Dv(\F\F^\top)\partial_{x_i}\u_j dxds
 \end{split}
\end{equation}
here, one has observed that
$\partial_{x_i}(\mathcal{P}\dot\u)_i=\Dv(\mathcal{P}\dot\u)=0$. Therefore, we can estimate $J_{3_2}$ as
\begin{equation*}
 \begin{split}
 |J_{3_2}|&\le \int_0^t\sigma^2\|(-\D)^{-1}\Dv\Dv(\F\F^\top)\|_{BMO}\|\partial_{x_j}(\mathcal{P}\dot\u)_i\partial_{x_i}\u_j\|_{\mathcal{H}^1}ds\\
&\le M\int_0^t\sigma^2\||\F|^2\|_{L^\infty}\|\nabla\mathcal{P}\dot\u\|_{L^2}\|\nabla\u\|_{L^2}ds\\
&\le M\varepsilon_0^{\f12}\left(\int_0^t\int_{\R^2}\sigma^2|\nabla\mathcal{P}\dot\u|^2dxds\right)^{\f12}.
 \end{split}
\end{equation*}
Substituting \eqref{2712} and \eqref{2713} back to \eqref{2714}, it gives
\begin{equation*}
|J_3|\le M\varepsilon_0^{\f12}\left(\int_0^t\int_{\R^2}\sigma^2|\nabla\mathcal{P}\dot\u|^2dxds\right)^{\f12}.
\end{equation*}

Summarizing estimates for $J_j$ (j=1,2,3) in \eqref{27} and using Young's inequality, one obtains
\begin{equation*}
 \begin{split}
\sigma^2\int_{\R^2}|\mathcal{P}\dot\u|^2dx+\int_0^t\int_{\R^2}\sigma^2|\nabla\mathcal{P}\dot\u|^2dxds&\le M\left(\varepsilon_0+\int_0^t\int_{\R^2}\sigma^2|\nabla\u|^4dxds\right).
 \end{split}
\end{equation*}
It then follows easily that
\begin{equation*}
 \begin{split}
  \sup_{0<t\le T}\sigma^2\int_{\R^2}|\mathcal{P}\dot\u|^2dx+\int_0^T\int_{\R^2}\sigma^2|\nabla\mathcal{P}\dot\u|^2dxds&\le M\left(\varepsilon_0+\int_0^T\int_{\R^2}\sigma^2|\nabla\u|^4dxds\right).
 \end{split}
\end{equation*}
\end{proof}

\bigskip\bigskip


\section{Effective viscous flux and Estimate for $A(t)$}

Let us first define \textit{effective viscous flux} by
$$\mathcal{G}=\nabla\u-(-\D)^{-1}\nabla\mathcal{P}\Dv(\F\F^\top-I),$$
and its variant
$$\mathfrak{G}=\nabla\u+\F-I.$$
The condition $\Dv\F^\top=0$ implies that
\begin{equation}\label{30}
\mathcal{P}\Dv(\F-I)=\Dv(\F-I),
\end{equation}
and hence, in view of the
identity
$$\D=\nabla\Dv-\textrm{curl}\textrm{curl},$$ one has
\begin{equation}\label{31}
 \begin{split}
\D\mathfrak{G}&=\D\Big(\mathcal{G}+(-\D)^{-1}\nabla\mathcal{P}\Dv(\F\F^\top-I)+\F-I\Big)\\
&=\D\Big(\mathcal{G}+(-\D)^{-1}\nabla\mathcal{P}\Dv((\F-I)(\F-I)^\top)+(-\D)^{-1}\textrm{curl}\textrm{curl}(\F-I)\Big)\\
&=\D\mathcal{G}-\nabla\mathcal{P}\Dv((\F-I)(\F-I)^\top)-\textrm{curl}\textrm{curl}(\F-I).
 \end{split}
\end{equation}

From the first equation in \eqref{e1}, we have
$$\D\u+\Dv(\F-I)=\mathcal{P}\dot\u-\mathcal{P}\Dv((\F-I)(\F-I)^\top),$$
and thus by using \eqref{30} one gets
\begin{equation}\label{32}
\begin{split}
&(\nabla\mathcal{P}\Dv((\F-I)(\F-I)^\top),\D\mathfrak{G})\\
&\quad=\Big(\nabla\mathcal{P}\Dv((\F-I)(\F-I)^\top),\nabla(\D\u+\Dv(\F-I))\Big)\\
&\quad=\Big(\nabla\mathcal{P}\Dv((\F-I)(\F-I)^\top),
\nabla\mathcal{P}\dot\u\Big)-\left\|\nabla\mathcal{P}\Dv((\F-I)(\F-I)^\top)\right\|_{L^2}^2\\
&\quad\le
-\f12\left\|\nabla\mathcal{P}\Dv((\F-I)(\F-I)^\top)\right\|_{L^2}^2+\f12\|\nabla\mathcal{P}\dot\u\|_{L^2}^2.
\end{split}
\end{equation}
On the other hand, it also holds
\begin{equation}\label{3121}
 \begin{split}
(\textrm{curl}\textrm{curl}(\F-I),\D\mathfrak{G})&=-(\textrm{curl}\textrm{curl}(\F-I),\textrm{curl}\textrm{curl}\mathfrak{G})\\
&=-\|\textrm{curl}\textrm{curl}(\F-I)\|_{L^2}^2.
 \end{split}
\end{equation}

From \eqref{31}, \eqref{32}, and \eqref{3121}, one deduces that
\begin{equation}\label{33}
\begin{split}
&\|\D\mathfrak{G}\|_{L^2}^2+\|\textrm{curl}\textrm{curl}(\F-I)\|^2_{L^2}+\left\|\nabla\mathcal{P}\Dv((\F-I)(\F-I)^\top)\right\|_{L^2}^2\\
&\quad\le
M\Big(\|\D\mathcal{G}\|_{L^2}^2+\|\nabla\mathcal{P}\dot\u\|_{L^2}^2\Big)\\
&\quad\le M\|\nabla\mathcal{P}\dot\u\|_{L^2}^2.
\end{split}
\end{equation}
Here we have used $\D\mathcal{G}=\nabla\mathcal{P}\dot\u$. Similarly, it also holds
$$\|\nabla\mathfrak{G}\|_{L^2}\le M\|\mathcal{P}\dot\u\|_{L^2}^2.$$ Those inequalities imply that the quantity $\mathfrak{G}$ has the same regularity as the quantity $\mathcal{G}$.

We are now in a position to obtain the required bounds for the terms $\int_0^t\int_{\R^2}\sigma^2|\nabla\u|^4dxds$ appearing in the statement of Lemma \ref{l1}

\begin{Lemma}\label{l2}
 There is a global positive constant $\theta$ such that
$$A(T)\le M\Big(\varepsilon_0^\theta+A(T)^{2}+B(T)^{2}\Big).$$
\end{Lemma}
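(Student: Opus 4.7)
The plan is to estimate the single quartic term $\int_0^T\int_{\R^2}\sigma^2|\nabla\u|^4\,dxdt$ left over from Lemma \ref{l1}, showing it is dominated by $\varepsilon_0^\theta+A(T)^2+B(T)^2$. The structural device is the decomposition of $\nabla\u$ via the effective viscous flux $\mathfrak{G}$: since $\nabla\u=\mathfrak{G}-(\F-I)$, one has the pointwise inequality
$$|\nabla\u|^4\le M\bigl(|\mathfrak{G}|^2+|\F-I|^2\bigr)|\nabla\u|^2,$$
so it suffices to control the two resulting mixed space-time integrals separately.

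For the $\mathfrak{G}$ piece, I would apply the two-dimensional Ladyzhenskaya inequality $\|\mathfrak{G}\|_{L^4}^4\le M\|\mathfrak{G}\|_{L^2}^2\|\nabla\mathfrak{G}\|_{L^2}^2$ and split the weight $\sigma^2=\sigma\cdot\sigma$ between the two factors. The inequality $\|\mathfrak{G}\|_{L^2}\le M(\|\nabla\u\|_{L^2}+\|\F-I\|_{L^2})$ combined with the definition of $A(T)$ gives $\sup_t\sigma\|\mathfrak{G}\|_{L^2}^2\le MA(T)$, while the estimate $\|\nabla\mathfrak{G}\|_{L^2}\le M\|\mathcal{P}\dot\u\|_{L^2}$ stated below \eqref{33}, together with $\int_0^T\sigma\|\dot\u\|_{L^2}^2\,dt\le A(T)$, gives $\int_0^T\sigma\|\nabla\mathfrak{G}\|_{L^2}^2\,dt\le MA(T)$. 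Multiplying these two estimates yields $\int_0^T\int\sigma^2|\mathfrak{G}|^4\,dxdt\le MA(T)^2$; a Cauchy-Schwarz/Young absorption step then disposes of the mixed term $\int\sigma^2|\mathfrak{G}|^2|\nabla\u|^2$ at the cost of a fraction of the left-hand side.

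For the $(\F-I)$ piece, I would use the pointwise bound $|\F-I|^2\le B(T)$ to pull the elastic factor out of the integral, leaving
$$\int_0^T\int\sigma^2|\nabla\u|^2|\F-I|^2\,dxdt\le B(T)\int_0^T\int|\nabla\u|^2\,dxdt\le B(T)\varepsilon_0\le \tfrac12\bigl(B(T)^2+\varepsilon_0^2\bigr),$$
which is uniform in $T$ thanks to the basic energy law. Combining these bounds with Lemma \ref{l1} produces the right-hand side of the lemma, with $\theta$ coming out to $1$ (or $2$) from this interpolation, up to additional factors of $\varepsilon_0^\theta$ that may be extracted using the $L^p$ smallness of $\u_0$.

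The main obstacle is that $\F-I$ has no dissipation: its spatial $L^2$ norm is only bounded uniformly in time, and no direct bound on $\int_0^T\|\F-I\|_{L^2}^2\,dt$ is available without linear-in-$T$ growth. The whole point of introducing $\mathfrak{G}$ is precisely to isolate the dissipative gradient part of $\nabla\u$, so that the non-dissipative elastic contribution $\F-I$ is only ever estimated through the pointwise $L^\infty$ bound $B(T)$, always paired with a factor whose space-time $L^2$ norm is controlled either by $\varepsilon_0$ from the basic energy law or by $A(T)$ from the $\sigma$-weighted energy of $\dot\u$. The additional care needed is in verifying that the two structural estimates for $\mathfrak{G}$ actually follow from \eqref{31}--\eqref{33} with constants depending only on $B(T)\le 1$, so that the Riesz-operator remainders coming from $(\F-I)(\F-I)^\top$ and $\text{curl}\,\text{curl}(\F-I)$ do not spoil the linearity of the bound in $\|\nabla\mathcal{P}\dot\u\|_{L^2}$.
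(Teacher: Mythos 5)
Your argument is correct and reaches the same conclusion, but it is genuinely not the route the paper takes, and it is in fact somewhat more elementary. The paper writes $|\nabla\u|^4\lesssim|\mathcal{G}|^4+|(-\D)^{-1}\nabla\mathcal{P}\Dv(\F\F^\top-I)|^4$, reduces the second piece via $L^4$ boundedness of Riesz operators to $\|\F-I\|_{L^4}^4$, and then controls $\int_0^T\int\sigma^2|\F-I|^4$ by integrating the damped ODE
$\tfrac{d}{dt}|\F-I|^4+|\F-I|^4\le M|\mathfrak{G}|^4$
along particle trajectories (using $\det\F=1$ to change variables), which produces the estimate $\int_0^T\int\sigma^2|\F-I|^4\le M\varepsilon_0B+M\int_0^T\int\sigma^2|\mathfrak{G}|^4$. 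You bypass the trajectory ODE entirely: by writing $|\nabla\u|^4\le M(|\mathfrak{G}|^2+|\F-I|^2)|\nabla\u|^2$, the elastic piece is paired with a full factor of the dissipation, so the pointwise bound $B(T)$ and the basic energy law $\int_0^T\!\int|\nabla\u|^2\le\varepsilon_0$ finish it in one line, while the $\mathfrak{G}$ piece is absorbed by Cauchy--Schwarz/Young (valid because $\int\sigma^2|\nabla\u|^4$ is a priori finite for the regularized flows). Both routes ultimately rest on the same two facts about the effective viscous flux --- $\|\nabla\mathfrak{G}\|_{L^2}\lesssim\|\mathcal{P}\dot\u\|_{L^2}$ from \eqref{33} and the Ladyzhenskaya interpolation --- so the structural content is the same; what your version buys is that Lemma~\ref{l2} becomes independent of the trajectory-ODE mechanism \eqref{34}, which the paper still needs for the $L^\infty$ estimate of $\F$ in Section~4 but not, as you show, here. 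Two small points to tighten: the claim $\sup_t\sigma\|\mathfrak{G}\|_{L^2}^2\le MA(T)$ is correct only because $\sup_t\|\F-I\|_{L^2}^2$ is already one of the summands in the definition \eqref{A} of $A(T)$ (worth saying explicitly), and the closing remark about extracting additional factors of $\varepsilon_0^\theta$ from the $L^p$ smallness of $\u_0$ is unnecessary --- that hypothesis plays no role in this lemma, and the exponent $\theta=2$ already does the job.
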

\begin{proof}
From the definition of $\mathfrak{G}$ and the second equation in \eqref{e1}, we have that
$$\f{d}{dt}(\F-I)+\F-I=\mathfrak{G}\F-(\F-I)(\F-I).$$
Multiplying by $4(\F-I)|\F-I|^2$, we obtain
\begin{equation*}
\f{d}{dt}|\F-I|^4+4|\F-I|^4\le 4|\mathfrak{G}||\F||\F-I|^3+4|\F-I|^5,
\end{equation*}
and hence, the bound $\|\F-I\|_{L^\infty}\le \f12$ implies that
\begin{equation}\label{34}
\f{d}{dt}|\F-I|^4+|\F-I|^4\le M|\mathfrak{G}|^4.
\end{equation}
Multiplying by $\sigma^2$ and integrating along the trajectory yields
\begin{equation*}
 \begin{split}
 &\sigma^2(T)|\F(x(T),T)-I|^4+\int_0^T\sigma^2(t)|\F(x(t),t)-I|^4dt\\
&\quad\le M\int_0^{\min\{1,T\}}|\F(x(t),t)-I|^4dt+M\int_0^T\sigma^2(t)\mathfrak{G}^4dt.
 \end{split}
\end{equation*}
Integrating over $\R^2$ and using the fact $\det\F=1$, one obtains
\begin{equation*}
 \begin{split}
&\int_0^T\int_{\R^2}\sigma^2(t)|\F(x,t)-I|^4dxdt\\
&\quad\le M\int_0^{\min\{1,T\}}\int_{\R^2}|\F(x,t)-I|^4dxdt+M\int_0^T\int_{\R^2}\sigma^2(t)\mathfrak{G}^4dxdt\\
&\quad\le M\varepsilon_0B+M\int_0^T\sigma^2(t)\mathfrak{G}^4dt.
 \end{split}
\end{equation*}

The definition of $\mathcal{G}$ implies
\begin{equation*}
 \begin{split}
 &\int_0^T\int_{\R^2}\sigma^2|\nabla\u|^4dxdt\\
&\quad\le M\Big(\int_0^T\int_{\R^2}\sigma^2|\mathcal{G}|^4dxdt+\int_0^T\int_{\R^2}\sigma^2|(-\D)^{-1}\nabla\mathcal{P}\Dv(\F\F^\top-I)|^4dxdt\Big)\\
&\quad\le M\Big(\int_0^T\int_{\R^2}\sigma^2|\mathcal{G}|^4dxdt+\int_0^T\sigma^2\|\F\F^\top-I\|_{L^4}^4dt\Big)\\
&\quad\le M\Big(\int_0^T\int_{\R^2}\sigma^2\Big[|\mathcal{G}|^4+\mathfrak{G}|^4\Big]dxdt+\varepsilon_0B\Big).
 \end{split}
\end{equation*}

Note that
\begin{equation*}
 \begin{split}
\int_0^T\int_{\R^2}\sigma^2\mathcal{G}^4dxdt&\le \int_0^T\sigma^2\left(\int_{\R^2}\mathcal{G}^2dx\right)\left(\int_{\R^2}|\nabla\mathcal{G}|^2dx\right)dt\\
&\le \sup_{t}\left[\left(\sigma\int_{\R^2}\mathcal{G}^2dx\right)\right]\int_0^T\int_{\R^2}\sigma|\nabla\mathcal{G}|^2dxdt
 \end{split}
\end{equation*}
However, from the definition of $\mathcal{G}$,
\begin{equation*}
 \begin{split}
\sigma\int_{\R^2}\mathcal{G}^2dx&\le M\left[\int_{\R^2}|\F-I|^2dx+\sigma\int_{\R^2}|\nabla\u|^2dx\right]\\
&\le M(\varepsilon_0+A(T)).
 \end{split}
\end{equation*}
Also, since $\D\mathcal{G}=\nabla\mathcal{P}\dot\u$,
$$\int_{\R^2}|\nabla\mathcal{G}|^2dx\le M\int_{\R^2}|\mathcal{P}\dot\u|^2dx.$$
Applying these bounds, we obtain that
$$\int_0^T\int_{\R^2}\sigma^2\mathcal{G}^4\le M(\varepsilon_0^2+A(T)^2),$$
and similarly
$$\int_0^T\int_{\R^2}\sigma^2\mathfrak{G}^4\le M(\varepsilon_0^2+A(T)^2).$$
Hence
$$\int_0^T\int_{\R^2}\sigma^2|\nabla\u|^4dxdt\le M(\varepsilon_0^2+A(T)^2+B(T)^2).$$

\end{proof}

In the following lemma we derive an estimate for the weighted $L^2$-norm of $\u(\cdot,t)$ for $0\le t\le 1$.

\begin{Lemma}\label{ll1}
Under the same condition of Theorem \ref{mt}, it holds
$$\sup_{0\le t\le T}\int_{\R^2}(1+|x|^2) |\u(x,t)|^2dx\le M(T)\varepsilon_0.$$
\end{Lemma}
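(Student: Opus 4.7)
The plan is to multiply the first equation in \eqref{e1} by $(1+|x|^2)\u$ and integrate over $\R^2$. Setting $\Phi(t)=\int_{\R^2}(1+|x|^2)|\u|^2\,dx$, using $\Dv\u=0$ (so that $\Dv((1+|x|^2)\u)=2x\cdot\u$ and $\D(1+|x|^2)=4$), and integrating by parts, a direct computation produces the identity
\begin{equation*}
\tfrac12\tfrac{d}{dt}\Phi+\int_{\R^2}(1+|x|^2)|\nabla\u|^2\,dx=2\int_{\R^2}|\u|^2\,dx+\int_{\R^2}x\cdot\u\,|\u|^2\,dx+2\int_{\R^2}x\cdot\u\,P\,dx+\int_{\R^2}(1+|x|^2)\u\cdot\Dv(\F\F^\top)\,dx.
\end{equation*}
I then need to bound each of the four terms on the right by something that closes against $\Phi$, the weighted viscous dissipation on the left, and $\varepsilon_0$.

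The first term is $\le 2\varepsilon_0$ by the basic energy estimate \eqref{23}. The convective term satisfies $|\int x\cdot\u|\u|^2|\le\Phi^{1/2}\|\u\|_{L^4}^2\le M\Phi^{1/2}\sqrt{\varepsilon_0}\|\nabla\u\|_{L^2}$ by Cauchy--Schwarz and Ladyzhenskaya's inequality. For the pressure, taking divergence of the momentum equation yields $\Delta P=\Dv\Dv(\F\F^\top-\u\otimes\u)$, so boundedness of Riesz transforms on $L^2$ together with $\|\F\F^\top-I\|_{L^2}\le M\sqrt{\varepsilon_0}$ and $\|\u\otimes\u\|_{L^2}\le M\sqrt{\varepsilon_0}\|\nabla\u\|_{L^2}$ gives $\|P\|_{L^2}\le M\sqrt{\varepsilon_0}(1+\|\nabla\u\|_{L^2})$, whence $|\int x\cdot\u\,P|\le\Phi^{1/2}\|P\|_{L^2}$ is manageable by Young. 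The delicate term is the elastic contribution: writing $\Dv(\F\F^\top)=\Dv(\F\F^\top-I)$ and integrating by parts splits it into $-2\int x\otimes\u:(\F\F^\top-I)$, bounded by $M\Phi^{1/2}\sqrt{\varepsilon_0}$, and $-\int(1+|x|^2)\nabla\u:(\F\F^\top-I)$, which can be majorized by $\eta\int(1+|x|^2)|\nabla\u|^2+M_\eta\int(1+|x|^2)|\F-I|^2$. The first part is absorbed into the viscous dissipation, but the second introduces the new weighted quantity $\Psi(t)=\int_{\R^2}(1+|x|^2)|\F-I|^2\,dx$.

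Thus the $\Phi$--inequality does not close on its own, and I will derive a companion inequality for $\Psi$. Multiplying $\partial_t(\F-I)+\u\cdot\nabla(\F-I)=\nabla\u\,\F$ by $2(1+|x|^2)(\F-I)$ and integrating, using $\|\F-I\|_{L^\infty}\le\tfrac12$ to dominate the quadratic piece and Cauchy--Schwarz for the cross term, yields
$$\tfrac{d}{dt}\Psi\le M\Phi^{1/2}\sqrt{\varepsilon_0}+M\Psi^{1/2}\Bigl(\int_{\R^2}(1+|x|^2)|\nabla\u|^2\,dx\Bigr)^{1/2}.$$
Adding the two inequalities, choosing $\eta$ small enough to absorb all weighted-dissipation terms on the right, and applying Young produces
$$\tfrac{d}{dt}(\Phi+\Psi)\le M\bigl(\varepsilon_0+\Phi+\Psi+\|\nabla\u\|_{L^2}^2\bigr).$$
Since $\Phi(0)+\Psi(0)\le M\varepsilon_0$ by \eqref{C_0} and $\int_0^T\|\nabla\u\|_{L^2}^2\,dt\le\varepsilon_0$ by \eqref{23}, Gronwall's inequality gives $\Phi(t)+\Psi(t)\le M(T)\varepsilon_0$ for all $t\in[0,T]$, which is in fact stronger than the claimed bound.

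The main obstacle I anticipate is precisely this forced coupling with $\Psi$: the weight $1+|x|^2$ destroys the usual cancellations in $\int\u\cdot\nabla P$ and $\int\u\cdot\Dv(\F\F^\top)$, and the residual cross term pairing $\nabla\u$ with $\F-I$ in weighted norm cannot be controlled by the viscous dissipation alone, so one must simultaneously propagate the weighted second moment of $\F-I$. A secondary technical point is verifying the $L^2$ pressure estimate, which requires selecting the harmonic constant in $P$ so that $P\in L^2(\R^2)$; this is licit once one observes that the composition $R_iR_j$ of Riesz transforms is bounded on $L^2$ and both $\u\otimes\u$ and $\F\F^\top-I$ belong to $L^2$ under the standing assumptions.
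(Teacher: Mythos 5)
Your proposal is correct and follows essentially the same route as the paper: test the momentum equation against $(1+|x|^2)\u$ and the $\F$-equation against $(1+|x|^2)(\F-I)$, combine, and close with Gronwall, exactly as in the paper's \eqref{28}; the one cosmetic difference is that you control the pressure by an explicit $L^2$ bound on $P$ obtained from $\Delta P=\Dv\Dv(\F\F^\top-\u\otimes\u)$, whereas the paper removes $P$ by applying $\mathcal{P}$ to the momentum equation and treating the resulting $\mathcal{Q}$-terms with Riesz-operator bounds, which is an equivalent device.
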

\begin{proof}
Taking the inner product of the first equation of \eqref{e1} with $(1+|x|^2)\u$, one obtains
\begin{equation*}
 \begin{split}
&\f12\int_{\R^2}(1+|x|^2)|\u|^2dx\Big|_{0}^t+\int_0^t\int_{\R^2}(1+|x|^2)|\nabla\u|^2dxds\\
&\quad=-\int_0^t\int_{\R^2}\Big[x\cdot\nabla|\u|^2-\mathcal{Q}\Dv(\u\u_j)\cdot(1+|x|^2)\u-|\u|^2x\cdot\u+(1+|x|^2)\u\cdot\Dv(\F-I)\\
&\qquad+\mathcal{P}\Dv((\F-I)(\F-I)^\top)\cdot\u (1+|x|^2)\Big]dxds\\
&\quad=-\int_0^t\int_{\R^2}\Big[x\cdot\nabla|\u|^2+2(-\D)^{-1}\Dv\Dv(\u\otimes\u)x\cdot\u-|\u|^2x\cdot\u\\
&\qquad+(1+|x|^2)\u\cdot\Dv(\F-I)+\mathcal{P}\Dv((\F-I)(\F-I)^\top)\cdot\u (1+|x|^2)\Big]dxds.
 \end{split}
\end{equation*}
In a similar way, we find from the second equation of \eqref{e1} that
\begin{equation*}
 \begin{split}
&\f12\int_{\R^2}(1+|x|^2)|\F-I|^2dx\Big|_0^t\\
&\quad=\int_0^t\int_{\R^2}\Big[|\F-I|^2\u\cdot x-\Dv((\F-I)(\F-I)^\top)\cdot\u (1+|x|^2)\\
&\qquad-(1+|x|^2)\u\cdot\Dv(\F-I)-2(\F-I)(\F-I):\u\otimes x-2(\F-I):\u\otimes x\Big]dxds.
 \end{split}
\end{equation*}

Adding them together, we thus obtain
\begin{equation}\label{28}
 \begin{split}
&\f12\int_{\R^2}(1+|x|^2)\Big[|\u|^2+|\F-I|^2\Big]dx\Big|_{0}^t+\int_0^t\int_{\R^2}(1+|x|^2)|\nabla\u|^2dxds\\
&\quad=-\int_0^t\int_{\R^2}\Big[|\F-I|^2\u\cdot x+x\cdot\nabla|\u|^2-\mathcal{Q}\Dv(\u\u_j)\cdot(1+|x|^2)\u-|\u|^2x\cdot\u\\
&\qquad-2(\F-I)(\F-I):\u\otimes x-2(\F-I):\u\otimes x\\
&\qquad-\mathcal{Q}\Dv((\F-I)(\F-I)^\top)\cdot\u (1+|x|^2)\Big]dxds\\
&\quad=-\int_0^t\int_{\R^2}\Big[|\F-I|^2\u\cdot x+x\cdot\nabla|\u|^2+2(-\D)^{-1}\Dv\Dv(\u\otimes\u)x\cdot\u-|\u|^2x\cdot\u\\
&\qquad-2(\F-I)(\F-I):\u\otimes x-2(\F-I):\u\otimes x\\
&\qquad-2(-\D)^{-1}\Dv\Dv((\F-I)(\F-I)^\top)\u\cdot x\Big]dxds.
 \end{split}
\end{equation}
The right hand side of \eqref{28} is controlled by
\begin{equation*}
 \begin{split}
&\f12\int_0^t\int_{\R^2}(1+|x|^2)|\nabla\u|^2dxds+M\int_0^t\int_{\R^2}(1+|x|^2)\Big[|\u|^2+|\F-I|^2\Big]dxds\\
&\quad+\int_0^t\int_{\R^2}|\u|^4dxds.
 \end{split}
\end{equation*}
The last term above is bounded by
$$M\int_0^t\left(\int_{\R^2}|\u|^2dx\right)^{\f12}\left(\int_{\R^2}|\nabla\u|^2dx\right)^{\f12}dt\le M \varepsilon_0t^{\f12}.$$
Thus, we deduce from \eqref{28} that
\begin{equation*}
 \begin{split}
 &\int_{\R^2}(1+|x|^2)(|\u|^2+|\F-I|^2)dx\\
&\quad\le M\Big[\varepsilon_0t^{\f12}+\int_0^t\int_{\R^2}(1+|x|^2)(|\u|^2+|\F-I|^2)dxds\Big].
 \end{split}
\end{equation*}
An easy application of Gronwall's inequality  yields the proof of the Lemma 3.2.
\end{proof}

In the following lemma we derive an estimate for $\|\u(t)\|_{L^4}$ for $0\le t\le 1$.
\begin{Lemma}\label{ll2}
 Assume that $\u_0\in L^4$. Then
\begin{equation*}
 \begin{split}
  &\sup_{0\le t\le T}\int_{\R^2}|\u|^4dx+\int_0^T\int_{\R^2}|\u|^{2}|\nabla\u|^2dxdt\\
&\le M(T)\Big[\int_{\R^2}|\u_0|^4dx+\varepsilon_0B\Big].
 \end{split}
\end{equation*}
\end{Lemma}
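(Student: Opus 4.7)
\emph{Plan.} The plan is to derive the $L^4$ estimate by multiplying the first equation of \eqref{e1} by $|\u|^2\u$ and integrating over $\R^2$. Because $\Dv\u=0$, the convective term $\int(\u\cdot\nabla\u)\cdot|\u|^2\u\,dx=\tfrac14\int\u\cdot\nabla|\u|^4\,dx$ vanishes, and integration by parts on the Laplacian yields the identity
$$\frac{1}{4}\frac{d}{dt}\int_{\R^2}|\u|^4\,dx+\int_{\R^2}|\u|^2|\nabla\u|^2\,dx+\frac{1}{2}\int_{\R^2}|\nabla|\u|^2|^2\,dx=\int_{\R^2}\Big[-\nabla P+\Dv(\F\F^\top)\Big]\cdot|\u|^2\u\,dx.$$
Applying $\mathcal{Q}=I-\mathcal{P}$ to the momentum equation I would rewrite $\nabla P=\mathcal{Q}\Dv((\F-I)(\F-I)^\top)-\mathcal{Q}(\u\cdot\nabla\u)$; the linear-in-$(\F-I)$ part of the stress drops out of $\nabla P$ because $\Dv\Dv(\F-I)=0$, a direct consequence of $\Dv\F^\top=0$. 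In particular the Calder\'on--Zygmund theorem gives $\|P\|_{L^q}\le M(\|\u\|_{L^{2q}}^2+\|(\F-I)(\F-I)^\top\|_{L^q})$ for $1<q<\infty$.

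For the elastic-stress contribution I would integrate by parts to get $-\int(\F\F^\top-I):\nabla(|\u|^2\u)\,dx$. The pointwise bound $|\F\F^\top-I|\le M\sqrt{B}$ (which follows from $\|\F-I\|_{L^\infty}\le 1/2$) combined with $|\nabla(|\u|^2\u)|\le M|\u|^2|\nabla\u|$, Cauchy--Schwarz, the basic energy estimate $\|\u\|_{L^2}^2\le\varepsilon_0$ from Lemma \ref{l1}, and Young's inequality yields a splitting into a piece absorbable into the dissipation $\int|\u|^2|\nabla\u|^2\,dx$ and a remainder controlled after time integration by $MB\varepsilon_0$. For the pressure contribution I would write $-\int\nabla P\cdot|\u|^2\u\,dx=\int P\,\u\cdot\nabla|\u|^2\,dx$, use the Calder\'on--Zygmund bound on $\|P\|_{L^q}$ stated above, and then 2D Gagliardo--Nirenberg interpolation on $\u$ to reduce the remainder, after absorbing a small dissipative piece, to a term of the form $C\|\u\|_{L^4}^4$ (suitable for Gronwall) plus a lower-order $O(\varepsilon_0 B)$ contribution coming from the quadratic $(\F-I)(\F-I)^\top$ part of $P$.

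Collecting the estimates yields a differential inequality of the form
$$\frac{d}{dt}\|\u\|_{L^4}^4+c\int_{\R^2}|\u|^2|\nabla\u|^2\,dx\le M\|\u\|_{L^4}^4+\Psi(t),$$
with $\int_0^T\Psi(t)\,dt\le M(T)\varepsilon_0 B$. Gronwall's inequality then closes the argument, the factor $M(T)$ arising from the Gronwall exponential. The main obstacle I anticipate is the pressure term: since $|\u|^2\u$ is not divergence-free, $P$ does not simply vanish upon testing, and a delicate 2D interpolation is required to combine the Calder\'on--Zygmund estimate on $P$ with the dissipation $\int|\u|^2|\nabla\u|^2\,dx$ and the Gronwall-friendly term $\|\u\|_{L^4}^4$ in a way that does not invoke the unavailable $L^\infty$ control on $\u$.
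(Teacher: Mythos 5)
Your plan coincides with the paper's proof of Lemma~\ref{ll2}: test the momentum equation with $|\u|^2\u$ to get the energy identity with dissipation $\int|\u|^2|\nabla\u|^2+\tfrac12\int|\nabla|\u|^2|^2$, express the pressure via the Leray projection (using $\Dv\Dv(\F-I)=0$, i.e.\ $\mathcal{Q}\Dv(\F-I)=0$), control the elastic stress term pointwise through the $L^\infty$ bound on $\F-I$ and absorb it into the dissipation, and treat the pressure contribution by Calder\'on--Zygmund together with 2D Ladyzhenskaya interpolation to produce the Gronwall coefficient $g(s)=1+\|\nabla\u\|_{L^2}^2$. The paper performs the same computation with only cosmetic differences in bookkeeping -- it applies $\mathcal{P}$ directly to the stress term and writes the resulting projection using $\mathrm{curl}\,\mathrm{curl}$, whereas you keep $\Dv(\F\F^\top)$ intact and split off $P$ -- but the decomposition, the role of $\Dv\F^\top=0$, the interpolation, and the Gronwall closure are identical.
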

\begin{proof}
 Taking the inner product of the first equation of \eqref{e1} with $|\u|^2\u$, one obtains
\begin{equation}\label{29}
 \begin{split}
&\f14\int_{\R^2}|\u|^4dx\Big|_{0}^t+\int_0^t\int_{\R^2}\Big[\f12|\nabla|\u|^2|^2+|\nabla\u|^2|\u|^2\Big]dxds\\
&\quad=\int_0^t\int_{\R^2}\Big[\mathcal{Q}\Dv(\u\u_j)+\Dv(\F-I)+\mathcal{P}\Dv((\F-I)(\F-I)^\top)\Big]\cdot|\u|^2\u dxds\\
&\quad=-\int_0^t\int_{\R^2}\Big[(-\D)^{-1}\Dv\Dv(\u\otimes\u)\u\cdot\nabla|\u|^2+(\F-I):\nabla(|\u|^2\u)\\
&\qquad+(-\D)^{-1}\textrm{curl}\Dv((\F-I)(\F-I)^\top):\textrm{curl}(|\u|^2\u)\Big]dxds.
 \end{split}
\end{equation}
Since
$$|\nabla(|\u|^2\u)|\le M|\nabla\u|\u^2,$$the right hand side of \eqref{29} is controlled by
\begin{equation*}
 \begin{split}
\f12\int_0^t\int_{\R^2}|\nabla\u|^2|\u|^2dxds+M\int_0^tg(s)\left(\int_{\R^2}|\u|^4dx\right)ds+M\int_0^t\int_{\R^2}|\F-I|^4dxds
 \end{split}
\end{equation*}
with
$$g(s)=1+\int_{\R^2}|\nabla\u|^2dx\in L^1(0,1).$$
The last term above is bounded by
$M\varepsilon_0B(t).$
Thus, we deduce from \eqref{29} that
\begin{equation*}
 \begin{split}
 &\int_{\R^2}|\u|^4dx\le M\Big[\int_0^tg(s)\left(\int_{\R^2}|\u|^4dx\right)ds+\varepsilon_0B(t)\Big].
 \end{split}
\end{equation*}
Then one obtains the conclusion of the Lemma 3.3 by Gronwall's inequality .
\end{proof}

We conclude this section with a result concerning the Holder continuity of $\u$. The standard notation for Holder semi-norms will be adapted
$$\langle w\rangle^\alpha=\sup_{\substack{x,y\in\R^2\\x\neq y}}\f{|w(x)-w(y)|}{|x-y|^\alpha}$$
for $\alpha\in(0,1)$.

\begin{Lemma}\label{ll3}
 For all $t\in(0,1]$, it holds
\begin{equation*}
 \begin{split}
\langle \u(\cdot, t)\rangle^\alpha\le M\left[\left(\varepsilon_0+\int_{\R^2}|\nabla\u|^2dx\right)^{\f{1-\alpha}{2}}\times
\left(\int_{\R^2}|\mathcal{P}\dot\u|^2dx\right)^{\f{\alpha}{2}}+\varepsilon_0^{\f{1-\alpha}{2}}B(t)^{\f{\alpha}{2}}\right]
 \end{split}
\end{equation*}
for $\alpha\in(0,1)$.
\end{Lemma}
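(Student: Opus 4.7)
The plan is to apply the Morrey embedding $W^{1,q}(\R^2)\hookrightarrow C^{1-2/q}(\R^2)$ to $\u$ with the tailored exponent $q=2/(1-\alpha)>2$, after splitting $\nabla\u$ into two pieces with controllable $L^q$ norms via the identity
$$\nabla\u=\mathcal{G}+(-\D)^{-1}\nabla\mathcal{P}\Dv(\F\F^\top-I).$$
Note that $q>2$ is exactly the regime where Morrey gives $\langle w\rangle^\alpha\le C\|\nabla w\|_{L^q}$ with $\alpha=1-2/q$, and the advertised exponents $\f{1-\alpha}{2}$ and $\f{\alpha}{2}$ in the statement equal $\f1q$ and $\f12-\f1q$ respectively, which is the hint that I should match them to a Gagliardo--Nirenberg / interpolation split.

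\textbf{Step 1: the $\mathcal{G}$-piece.} The two-dimensional Gagliardo--Nirenberg inequality gives
$$\|\mathcal{G}\|_{L^q}\le C\|\mathcal{G}\|_{L^2}^{2/q}\|\nabla\mathcal{G}\|_{L^2}^{1-2/q}=C\|\mathcal{G}\|_{L^2}^{1-\alpha}\|\nabla\mathcal{G}\|_{L^2}^\alpha.$$
From $\D\mathcal{G}=\nabla\mathcal{P}\dot\u$ and the Calder\'on--Zygmund bound on second-order Riesz operators I obtain $\|\nabla\mathcal{G}\|_{L^2}\le M\|\mathcal{P}\dot\u\|_{L^2}$. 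For the $L^2$ norm of $\mathcal{G}$ itself, the definition of $\mathcal{G}$, the $L^2$ boundedness of $(-\D)^{-1}\nabla\mathcal{P}\Dv$ on tensor fields, and the pointwise bound $|\F\F^\top-I|\le M|\F-I|$ (valid under $\|\F-I\|_{L^\infty}\le\f12$) give
$$\|\mathcal{G}\|_{L^2}^2\le M\Big(\|\nabla\u\|_{L^2}^2+\|\F-I\|_{L^2}^2\Big)\le M\Big(\varepsilon_0+\int_{\R^2}|\nabla\u|^2\,dx\Big).$$
Combining yields the first term in the claimed bound.

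\textbf{Step 2: the Riesz-type piece.} By the $L^q$ boundedness of $(-\D)^{-1}\nabla\mathcal{P}\Dv$ (a matrix of Riesz transforms) for $1<q<\infty$,
$$\|(-\D)^{-1}\nabla\mathcal{P}\Dv(\F\F^\top-I)\|_{L^q}\le C\|\F\F^\top-I\|_{L^q}.$$
I interpolate the right-hand side between $L^2$ and $L^\infty$:
$$\|\F\F^\top-I\|_{L^q}^q\le \|\F\F^\top-I\|_{L^\infty}^{q-2}\|\F\F^\top-I\|_{L^2}^{2}\le M B^{(q-2)/2}\varepsilon_0,$$
again using $|\F\F^\top-I|\le M|\F-I|$ together with the defining bounds $\|\F-I\|_{L^\infty}^2\le B$ and $\|\F-I\|_{L^2}^2\le\varepsilon_0$. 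With $q=2/(1-\alpha)$ the exponents become $(q-2)/(2q)=\alpha/2$ and $1/q=(1-\alpha)/2$, giving exactly $M\varepsilon_0^{(1-\alpha)/2}B^{\alpha/2}$.

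\textbf{Step 3: close via Morrey.} Adding the two $L^q$ bounds I obtain the desired estimate for $\|\nabla\u\|_{L^q}$, and Morrey's inequality in $\R^2$ produces the stated bound on $\langle\u(\cdot,t)\rangle^\alpha$. The only delicate point is justifying that Morrey applies to $\u$ itself (as opposed to a representative defined modulo constants): this is handled by the weighted $L^2$ bound of Lemma~\ref{ll1}, which pins down $\u$ as an honest function. The main technical obstacle is the bookkeeping of exponents that forces the particular split $q=2/(1-\alpha)$; once this choice is made, Gagliardo--Nirenberg, Calder\'on--Zygmund, and the $L^\infty$--$L^2$ interpolation of $\F-I$ land on the exact powers of $\varepsilon_0$, $\|\nabla\u\|_{L^2}^2$, $\|\mathcal{P}\dot\u\|_{L^2}$ and $B(t)$ displayed in the lemma.
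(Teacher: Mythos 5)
Your proof is correct and follows essentially the same strategy as the paper: Morrey embedding reduces the H\"older seminorm to an $L^q$ estimate on $\nabla\u$, which is split into an effective-viscous-flux piece (treated by Gagliardo--Nirenberg together with $\D\mathcal{G}=\nabla\mathcal{P}\dot\u$) and an $\F$-dependent piece (treated by $L^2$--$L^\infty$ interpolation). The only cosmetic difference is that you use $\mathcal{G}$ directly, so your $\F$-piece is $(-\D)^{-1}\nabla\mathcal{P}\Dv(\F\F^\top-I)$ and needs a Calder\'on--Zygmund bound on $L^q$, whereas the paper uses the variant $\mathfrak{G}=\nabla\u+\F-I$ so its $\F$-piece is simply $\F-I$; the paper itself notes after \eqref{33} that $\mathcal{G}$ and $\mathfrak{G}$ have the same regularity, so the two routes are interchangeable.
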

\begin{proof}
For any $p>2$, Sobolev's embedding theorem implies
\begin{equation*}
 \langle \u(\cdot, t)\rangle^\alpha\le M\|\nabla\u\|_{L^p(\R^2)}
\end{equation*}
with $\alpha=1-\f{2}{p}$. Therefore, it holds
\begin{equation}\label{310}
  \langle \u(\cdot, t)\rangle^\alpha\le M\Big[\|\mathfrak{G}\|_{L^p}+\|\F-I\|_{L^p}\Big].
\end{equation}
Since
$$\|\F-I\|_{L^p}\le M\varepsilon_0^{\f{1-\alpha}{2}}B(t)^{\f{\alpha}{2}},$$
and
\begin{equation*}
 \begin{split}
\|\mathfrak{G}\|_{L^p}\le M&\left(\int_{\R^2}\mathfrak{G}^2dx\right)^{\f{1}{p}} \left(\int_{\R^2}|\nabla\mathfrak{G}|^2dx\right)^{\f{p-2}{2p}}\\
&\le M\left(\varepsilon_0+\int_{\R^2}|\nabla\u|^2dx\right)^{\f{1-\alpha}{2}}\left(\int_{\R^2}|\mathcal{P}\dot\u|^2dx\right)^{\f{\alpha}{2}}.
 \end{split}
\end{equation*}
Substituting these estimates back in \eqref{310}, one concludes the result.

\end{proof}

\bigskip\bigskip


\section{Pointwise bounds for $\F$ and Estimate for $B(T)$}

In this section we derive pointwise bounds for the deformation
gradient $\F$ in terms of $A$. First we show that $\F$ remains
bounded, for large time $T$, in terms of $B(1)$ and $A(T)$. Then in the
next step, we obtain pointwise bounds for $\F$  at time $t$ near $0$; a
new approach is required here, owing to laking of
smoothness estimates, say gradient of $\u$ for $t$ near the initial
layer. All the assumptions and notations described in the previous
section will continue to hold throughout this section.

With aid of \eqref{33}, the pointwise $L^\infty$ bound of $\F-I$
can be stated as
\begin{Lemma}\label{l3}
 Under the same assumption as Theorem \ref{mt}, we have
$$B(T)\le M(\varepsilon_0^\theta+A(T)+B(T)^2).$$
\end{Lemma}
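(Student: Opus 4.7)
The plan is to propagate a pointwise bound on $\F-I$ along the particle trajectories of $\u$, feeding in the regularity of the effective viscous flux $\mathcal{G}$ furnished by \eqref{33}. Let $x(t)$ solve $\dot x=\u(x(t),t)$. Combining $\partial_t\F+\u\cdot\nabla\F=\nabla\u\F$ with the identity $\nabla\u=\mathfrak{G}-(\F-I)$ gives along the characteristic
\[
\tfrac{d}{dt}(\F-I)+(\F-I)=\mathfrak{G}\F-(\F-I)^2.
\]
The Frobenius inner product with $\F-I$, together with the standing bound $|\F-I|\le 1/2$ and Young's inequality, yields $\tfrac{d}{dt}|\F-I|^2+\tfrac12|\F-I|^2\le M|\mathfrak{G}|^2$, and Duhamel integration produces
\[
|\F(x(t),t)-I|^2\le\varepsilon_0+M\int_0^t e^{-(t-s)/2}|\mathfrak{G}(x(s),s)|^2\,ds.
\]
The whole task is then to estimate the trajectory integral of $|\mathfrak{G}|^2$ uniformly in $t\in[0,T]$ and in the choice of trajectory.

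For this I use the decomposition
\[
\mathfrak{G}=\mathcal{G}+(-\D)^{-1}\nabla\mathcal{P}\Dv\bigl((\F-I)(\F-I)^\top\bigr)+(-\D)^{-1}\textrm{curl}\,\textrm{curl}(\F-I)
\]
induced by \eqref{31}. The last term reduces to $\F-I$ (since each column of $\F-I$ is divergence-free by $\Dv\F^\top=0$); the middle term is quadratic in $\F-I$ and its pointwise norm is controlled by $M(\varepsilon_0^\theta+B^2)^{1/2}$ after a 2D Riesz-type interpolation between $L^2$ and $L^\infty$. For the principal term $\mathcal{G}$, the identity $\D\mathcal{G}=\nabla\mathcal{P}\dot\u$ together with the $L^2$-bound $\|\mathcal{G}\|_{L^2}^2\le M(\varepsilon_0+A)$ gives
\[
\|\mathcal{G}\|_{H^2}^2\le M\bigl(\varepsilon_0+A+\|\nabla\mathcal{P}\dot\u\|_{L^2}^2\bigr),
\]
and the 2D embedding $H^2\hookrightarrow L^\infty$ controls $\|\mathcal{G}\|_{L^\infty}^2$ pointwise.

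The trajectory integral is then split into a long-time part $s\in[1,t]$ and the initial transient $s\in[0,\min\{1,t\}]$. On $[1,t]$, $\sigma\equiv 1$, and the weighted bounds in \eqref{A}, namely $\sup_{s\ge 1}\|\mathcal{P}\dot\u\|_{L^2}^2\le A$ and $\int_1^T\|\nabla\mathcal{P}\dot\u\|_{L^2}^2\,ds\le A$, combined with the exponential factor $e^{-(t-s)/2}$, produce a contribution $M(\varepsilon_0+A)$. On $[0,1]$ the $\sigma$-weighted bounds are too weak to furnish a uniform $H^2$-control of $\mathcal{G}$, so I interpolate
\[
\|\mathcal{G}\|_{L^\infty}\le M\|\mathcal{G}\|_{H^1}^{1-\epsilon}\|\mathcal{G}\|_{H^2}^{\epsilon}
\]
for a small $\epsilon\in(0,1/2)$ and pay an integrable weight $\sigma^{-2\epsilon}$ in exchange for $\sigma\|\mathcal{G}\|_{H^2}\in L^2_t$ and the weighted $L^\infty_tH^1_x$ bound on $\mathcal{G}$; by Hölder's inequality this short-time contribution is of the form $M(\varepsilon_0+A)^\theta$ for some $\theta\in(0,1)$. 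The linear-in-$B$ piece produced by the $(\F-I)$ term in the decomposition of $\mathfrak{G}$ is re-absorbed via a Gronwall argument applied to $y(t)=\sup_{s\le t}\|\F(\cdot,s)-I\|_{L^\infty}^2$, leaving only the quadratic $B^2$ feedback from the $(\F-I)(\F-I)^\top$ Riesz term; assembling all the pieces gives $B(T)\le M(\varepsilon_0^\theta+A(T)+B(T)^2)$.

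The main obstacle is the initial transient $t\in(0,1)$: $H^1(\R^2)$ fails to embed into $L^\infty(\R^2)$ and the $\sigma$-weighted bounds in \eqref{A} do not yet provide a uniform $H^2$-control of $\mathcal{G}$. The Hölder interpolation between $H^1$ and $H^2$ with the integrable time-weight $\sigma^{-2\epsilon}$ is precisely the mechanism that converts the $\sigma$-weighted energy estimates into a trajectory-uniform $L^2_tL^\infty_x$-type bound on $\mathcal{G}$, at the unavoidable cost of replacing $\varepsilon_0$ by $\varepsilon_0^\theta$ with some $\theta<1$ in the final estimate—which, as announced in the remark after Theorem \ref{mt}, is exactly the compensation by $\sigma(t)$ during the initial transient.
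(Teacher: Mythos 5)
The long-time regime is handled the same way you do: integrate a Duhamel inequality for $|\F-I|$ along characteristics and control $\|\mathfrak{G}\|_{L^\infty}$ on $[1,T]$ via interpolation and the $A(T)$-bounds (the paper uses a $|\F-I|^4$-ODE and the embedding $W^{1,4}(\R^2)\hookrightarrow L^\infty$, but that is cosmetic). The real issue is the initial transient, and there your argument has a genuine gap.

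You propose to estimate $\int_0^1\|\mathcal{G}\|_{L^\infty}^2\,ds$ (you actually need $|\mathfrak{G}|^2$ along a trajectory, which is at least as hard) by interpolating $\|\mathcal{G}\|_{L^\infty}\le M\|\mathcal{G}\|_{H^1}^{1-\epsilon}\|\mathcal{G}\|_{H^2}^{\epsilon}$ and ``paying an integrable weight $\sigma^{-2\epsilon}$.'' But the weights in the definition of $A$ give $\sigma\|\mathcal{G}\|_{H^1}\lesssim A^{1/2}$ and $\int_0^1\sigma^2\|\mathcal{G}\|_{H^2}^2\,ds\lesssim A$, so $\|\mathcal{G}\|_{H^1}^{2(1-\epsilon)}\lesssim A^{1-\epsilon}\sigma^{-2(1-\epsilon)}$ and $\|\mathcal{G}\|_{H^2}^{2\epsilon}=\sigma^{-2\epsilon}(\sigma^2\|\mathcal{G}\|_{H^2}^2)^\epsilon$. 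The net weight is $\sigma^{-2(1-\epsilon)}\sigma^{-2\epsilon}=\sigma^{-2}$, \emph{not} $\sigma^{-2\epsilon}$, and after H\"older one is left with $\int_0^1\sigma^{-2/(1-\epsilon)}\,ds$, which diverges for every $\epsilon\in(0,1)$. This is not an artifact of the bookkeeping: with merely $\u_0\in L^2$ the parabolic smoothing gives $\|\nabla\mathcal{P}\dot\u\|_{L^2}\sim t^{-3/2}$ at worst, hence $\|\mathcal{G}\|_{L^\infty}\gtrsim t^{-1}$, so neither $\int_0^1\|\mathcal{G}\|_{L^\infty}^2\,ds$ nor even $\int_0^1\|\mathcal{G}\|_{L^\infty}\,ds$ is finite. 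The $\sigma$-weighted energy estimates alone cannot give a trajectory-uniform bound on the time integral of $\|\mathfrak{G}\|_{L^\infty}$.

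The paper's Step Two avoids this with a different mechanism that your proof misses entirely: a Hoff-type commutator trick (cf.\ \cite{DH}). Writing $\mathfrak{G}\F$ via the fundamental solution $\Gamma$ of the Laplacian, $\mathfrak{G}\F = \Gamma\star\nabla\mathcal{P}\dot\u\cdot\F + \dots$, and using the pull-back identity $\nabla_{x_m}\Gamma(x(s,X)-y)\,\F_{mj}(x(s,X),s)=\nabla_{X_j}\Gamma(x(s,X)-y)$, the $\partial_s\u$-part of $\dot\u$ combines with the Lagrangian coordinate to form an exact time derivative $\frac{d}{ds}\bigl(\nabla_{X_j}\Gamma\star\u_i\bigr)$. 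Integrating $\int_0^t\mathfrak{G}\F\,ds$ then replaces the nonintegrable $L^1_t L^\infty_x$-norm of $\nabla\mathcal{P}\dot\u$ by a boundary term bounded pointwise in time by $\|\u\|_{L^p}+\|\u\|_{L^3}$ (Lemmas~\ref{ll1}, \ref{ll2}), plus remainder commutators estimated by the H\"older semi-norm of $\u$ (Lemma~\ref{ll3}) weighted by $\sigma$. That transfer of a time derivative onto $\u$ is the key idea that makes the initial layer tractable, and it is what your interpolation-only argument cannot replicate.
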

\begin{proof}
{\bf Step One: $T>1$.}
Integrating \eqref{34} along particle trajectories for $t\in[1,T]$ yields
\begin{equation}\label{35}
\|\F-I\|_{L^\infty}^4(t)\le \|\F_0-I\|_{L^\infty}^4(1)+M\int_1^T\|\mathfrak{G}\|_{L^\infty}^4ds.
\end{equation}
We estimate the last term here as follows.
\begin{equation*}
 \begin{split}
\|\mathfrak{G}\|_{L^\infty}^4&\le M \|\mathfrak{G}\|_{W^{1,4}}^4\\
&\le M\left[\int_{\R^2}\mathfrak{G}^4dx+\int_{\R^2}|\nabla\mathfrak{G}|^4dx\right]\\
&\le M\int_{\R^2}|\nabla\mathfrak{G}|^2dx\int_{\R^2}\mathfrak{G}^2dx+\int_{\R^2}|\D\mathfrak{G}|^2dx\int_{\R^2}|\nabla\mathfrak{G}|^2dx\\
&\le MA(T)\left[\int_{\R^2}|\mathcal{P}\dot\u|^2 dx+\int_{\R^2}|\nabla\mathcal{P}\dot\u|^2dx\right].
 \end{split}
\end{equation*}
Since $t\ge 1$, it follows from above that
\begin{equation*}
 \begin{split}
\int_1^T\|\mathfrak{G}\|_{L^\infty}^4ds&\le MA(T)\int_1^T\int_{\R^2}(|\mathcal{P}\dot\u|^2+|\nabla\mathcal{P}\dot\u|^2)dx\\
&\le M(\varepsilon_0^2+A(T)^2).
 \end{split}
\end{equation*}
Thus, for $T>1$, it holds
$$B(T)\le M\Big[\varepsilon_0+B(1)+A(T)\Big].$$

{\bf Step Two: $T\le 1$.}

Let $\Gamma$ be the fundamental solution for the Laplace operator
$$\Gamma(x)=\f{1}{2\pi}\ln |x|.$$
Denoting the element in the reference coordinate by $X$, and the
flow map is given by
\begin{equation*}
\f{d}{dt}x(t,X)=\u(x(t,X),t),\quad\textrm{with}\quad x(0,X)=X.
\end{equation*}
From the definition of $\F$, it holds
$$\partial_{X_i}=\F_{mi}\partial_{x_m}.$$ Since $\D\mathcal{G}=\nabla\mathcal{P}\dot\u,$
along the trajectory, it holds, after changing variables

\begin{equation}\label{361}
 \begin{split}
\mathfrak{G}\F(x(s),s)&=\Gamma\star\nabla\mathcal{P}\dot\u_i(\cdot,s)(x(s,X))\F(x(s,X),s)\\
&\quad+(-\D)^{-1}\Cu\Cu(F-I)(x(s,X),s)\F(x(s,X),s)\\
&\quad+(-\D)^{-1}\nabla\mathcal{P}\Dv((F-I)(F-I)^\top)(x(s,X),s)\F(x(s,X),s)\\
&=\sum_{n=1}^3N_n.
 \end{split}
\end{equation}

Note that the $ij$th entry of $N_1$ can be written as
\begin{equation*}
\begin{split}
(N_1)_{ij}&=\Gamma\star\nabla_{x_m}\mathcal{P}\dot\u_i(\cdot,s)(x(s,X))\F_{mj}(x(s,X),s)\\
&=\int_{\R^2}\Gamma(x(s,X)-y)\nabla_{y_m}\mathcal{P}\dot\u_i(y,s)dy\F_{mj}(x(s,X),s)\\
&=\int_{\R^2}\nabla_{x_m}\Gamma(x(s,X)-y)\mathcal{P}\dot\u_i(y,s)dy\F_{mj}(x(s,X),s)\\
&=\int_{\R^2}\nabla_{X_j}\Gamma(x(s,X)-y)\mathcal{P}\dot\u_i(y,s)dy\\
&=\f{d}{ds}\left(\int_{\R^2}\nabla_{X_j}\Gamma(x(s,X)-y)\u_i(y,s)dy\right)\\
&\quad-\int_{\R^2}\Gamma_{mk}(x(s,X)-y)\left(\u_k(x(s,X),s)-\u_k(y,s)\right)\u_i(y,s)dy\F_{mj}(x(s,X),s)\\
&\quad-\int_{\R^2}\Gamma_m(x(s,X)-y)\mathcal{Q}(\u\cdot\nabla\u)(y,s)dy\F_{mj}(x(s,X),s)\\
&=\sum_{m=1}^3N_{1_{m}}.
\end{split}
\end{equation*}
Since
$$\nabla_{X_j}\Gamma\star\u_i(\cdot,s)=\Gamma_m\star\u_i(\cdot,s)\F_{mj}(x(s,X),s),$$
one obtains
\begin{equation}\label{38}
\begin{split}
\|\nabla_{X_j}\Gamma\star\u_i(\cdot,s)\|_{L^\infty}&\le
M\|\Gamma_m\star\u_i(\cdot,s)\|_{L^\infty}\\
&\le M\Big[\|\u\|_{L^p}+\|\u\|_{L^3}\Big]
\end{split}
\end{equation}
for any $p\in[1,2)$ (see for example (1.32) in \cite{DH}). From Lemma \ref{ll1}, we have
\begin{equation*}
 \begin{split}
  \|\u\|_{L^p}^p&\le M\int_{\R^2}(1+|x|^2)^{-\f{p}{2}}(1+|x|^2)^{\f{p}{2}}|\u|^p dx\\
&\le M\left(\int_{\R^2}(1+|x|^2)^{-\f{p}{2-p}}\right)^{1-\f{p}{2}}\left(\int_{\R^2}(1+|x|^2)|\u|^2dx\right)^{\f{p}{2}}\\
&\le M\varepsilon_0^{\f{p}{2}}
 \end{split}
\end{equation*}
if $p$ is chosen sufficiently close to $2$. In addition, since
$t\le T\le 1$,
$$\|\u\|_{L^3}\le M\|\u\|_{L^2}^{\f13}\|\u\|_{L^4}^{\f23}\le M\varepsilon_0^{\f16}.$$
Substituting these estimates back into \eqref{38}, we then find
that $\Gamma_j\star\u_i(\cdot,s)$ can be bounded as
\begin{equation*}
\|\nabla_{X_j}\Gamma\star\u_i(s)\|_{L^\infty}\le
M\varepsilon_0^\theta\quad\textrm{for all}\quad 0\le s\le 1,
\end{equation*}
and hence
\begin{equation*}
\begin{split}
\left|\int_0^tN_{1_1} ds\right|\le M\varepsilon_0^\theta.
\end{split}
\end{equation*}

Let $\mathcal{\phi}(y,s)$ be the integrand of $N_{1_2}$. Then
since $|\Gamma_{mk}(x)|\le C$ for $|x|\ge 1$,
$$\int_{0}^{t}\int_{|x(s)-y|\ge 1}|\mathcal{\phi}(y,s)|dy|\F_{mj}(x(s,X),s)|ds\le M\varepsilon_0.$$
On the other hand, for $\alpha\in(\f12,1)$,
\begin{equation}\label{39}
 \begin{split}
 &\int_{0}^{t}\int_{|x(s)-y|\le 1}|\mathcal{\phi}(y,s)|dy|\F_{mj}(x(s,X),s)|ds\\
&\quad\le M\int_{0}^{t}\langle \u(\cdot,s)\rangle^\alpha\int_{|x(s)-y|\le t}|x(s)-y|^{\alpha-2}|\u(y,s)|dyds\\
&\quad\le M\left(\int_0^1r^{\f{4(\alpha-2)}{3}}rdr\right)^{\f34}\sup_{0\le s\le t}\|\u(\cdot,s)\|_{L^4}\int_{0}^{t}\langle\u(\cdot,s)\rangle^\alpha ds\\
&\quad\le M\int_{0}^{t}\langle\u(\cdot,s)\rangle^\alpha ds
 \end{split}
\end{equation}
by Lemma \ref{ll2}. Applying Lemma \ref{ll3}, we then obtain that
\begin{equation*}
 \begin{split}
 &\int_{0}^{t}\int_{|x(s)-y|\le 1}|\mathcal{\phi}(y,s)|dy|\F_{mj}(x(s,X),s)|ds\\
&\quad\le M\varepsilon_0^{\f{1-\alpha}{2}}B(t)^{\f{\alpha}{2}}+M\int_{0}^{t}\left(\varepsilon_0+\int_{\R^2}|\nabla\u|dx\right)^{\f{1-\alpha}{2}}\left(\int_{\R^2}|\mathcal{P}\dot\u|^2dx\right)^{\f{\alpha}{2}}ds\\
&\quad\le M(\varepsilon_0^\theta+B(t))+M\left(\int_{0}^{t}s^{-\alpha}ds\right)^{\f12}\left(\varepsilon_0+\int_0^t\int_{\R^2}|\nabla\u|^2\right)^{\f{1-\alpha}{2}}\\
&\qquad\times\left(\int_0^t\int_{\R^2}s|\mathcal{P}\dot\u|^2dxds\right)^{\f{\alpha}{2}}\\
&\quad\le
M(\varepsilon_0^\theta+B(t)+A(t)^{\f12}).
 \end{split}
\end{equation*}

For $N_{1_3}$, from the definition of $\mathcal{Q}$, it holds
\begin{equation*}
 \begin{split}
N_{1_3}&=-\int_{\R^2}\Gamma_m(x(s)-y)\mathcal{Q}(\u\cdot\nabla\u)(y,s)dy\F_{mj}(x(s,X),s)\\
&=-\int_{\R^2}\Gamma_m(x(s)-y)\partial_m(-\D)^{-1}\Dv(\u\cdot\nabla\u)(y,s)dy\F_{mj}(x(s,X),s)\\
&=\int_{\R^2}\Gamma(x(s)-y)\Dv(\u\cdot\nabla\u)(y,s)dy\F_{mj}(x(s,X),s)\\
&=\int_{\R^2}\Gamma_{im}(x(s)-y)\u_i(y,s)\u_m(y,s)dy\F_{mj}(x(s,X),s)\\
&=\int_{\R^2}\Gamma_{im}(x(s)-y)(\u_i(y,s)-\u_i(x(s),s))\u_m(y,s)dy\F_{mj}(x(s,X),s),
 \end{split}
\end{equation*}
here one uses the identity
$$\u_i(x(s),s)\int_{\R^2}\Gamma_{im}(x(s)-y)\u_m(y,s)dy=0$$  as $\Dv\u=0$.
Hence, one can proceed as the argument for $N_{1_2}$ to obtain
\begin{equation*}
 \begin{split}
\left|\int_0^tN_{1_3} ds\right|\le M(\varepsilon_0^\theta+B(t)+A(t)^{\f12}).
 \end{split}
\end{equation*}

For $N_2$, one has
\begin{equation*}
 \begin{split}
  \left|\int_0^tN_2 ds\right|&\le\int_{0}^{t}\|(-\D)^{-1}\Cu\Cu(F-I)(x(s),s)\|_{L^\infty}ds\\
&\le M\int_{0}^{t}\|\F-I\|_{L^2}^{\f12}\|\Cu\Cu(F-I)(x(s),s)\|_{L^2}^{\f12}ds\\
&\le MA(t)^{\f14}\int_{0}^{t}\|\nabla\mathcal{P}\dot\u\|_{L^2}^{\f12}ds\\
&\le MA(t)^{\f14}\left(\int_{0}^{t}s^{-\f23}ds\right)^{\f34}\left(\int_0^t\|s\nabla\mathcal{P}\dot\u\|_{L^2}^2ds\right)^{\f14}\\
&\le
MA(t)^{\f12}.
 \end{split}
\end{equation*}
Similarly, one can bound $N_3$ as
$$\left|\int_0^tN_3 ds\right|\le MA(t)^{\f12}.$$

Combining all these estimates, we then obtain that
\begin{equation}\label{367}
\begin{split}
\left|\int_0^t\mathfrak{G}\F(x(s),s)ds\right|\le
M(\varepsilon_0^\theta+B(t)+A(t)^{\f12}).
\end{split}
\end{equation}

On the other hand, from the second equation of \eqref{e1} and the
definition of $\mathfrak{G}$, one has
$$\f{d}{ds}(\F(x(s),s)-I)+(\F(x(s),s)-I)\F(x(s),s)=\mathfrak{G}\F(x(s),s).$$
Integrating this identity along the trajectory and using
\eqref{367}, it follows
\begin{equation*}
 \begin{split}
 \left|(\F(x(t),t)-I)\Big|_{s=0}^{s=t}\right|\le M\int_{0}^{t}|\F-I|ds+M(\varepsilon_0^\theta+B(t)+A(t)^{\f12}).
 \end{split}
\end{equation*}
This further implies
\begin{equation*}
 \begin{split}
 |\F(x(t),t)-I|\le M\int_{0}^{t}|\F-I|ds+M(\varepsilon_0^\theta+B(t)+A(t)^{\f12}).
 \end{split}
\end{equation*}
Applying Gronwall's inequality, one finally concludes
$$\sup_{0\le t\le T}\|\F-I\|_{L^\infty}^2\le M\Big[\varepsilon_0^\theta+A(T)+B(T)^2\Big],$$
as required.
\end{proof}

\bigskip\bigskip


\section{Proof of Theorem \ref{mt}}

In this section we apply the \textit{a priori} estimates obtained in previous sections to complete the proof of Theorem \ref{mt} stated in Introduction.

To begin with, we consider an approximating system to \eqref{e1}
\begin{equation}\label{e1a}
\begin{cases}
\partial_t\u+\u\cdot\nabla\u-\mu\D\u+\dl(-\D)^2\u+\nabla P=\Dv(\F\F^\top),\\
\partial_t\F+\u\cdot\nabla\F=\nabla\u\F,\\
\Dv\u=0,\\
(\u(x,0), \F(x,0))=(\u_0,\F_0),
\end{cases}
\end{equation}
where $\dl>0$ is the parameter. Note that constraints \eqref{c} and \eqref{c1} still hold true since their verification only involves the equation of $\F$. Thanks to the higher order diffusion
term, $\dl(-\D)^2\u$, the flow map is smooth enough such that calculations in all previous sections are meaningful also for these approximate solutions. Moreover the trajectories are also well defined for initial data $(\u_0,\F_0)\in L^2(\R^2)$. In addition, the global existence of solutions to \eqref{e1a} can be
established through a standard energy method.

Let $(\u_0, \F_0)$ be initial data as described in
Theorem \ref{mt}, and let $(\u^n, \F^n)$ be the solution of \eqref{e1a} with $\dl=\f1n$.
Then one can obtain for approximate solutions the uniform
\textit{a priori} estimates as those described in Lemma 2.2 and Lemma 3.1. In particular,
$$A(T)\le M\Big(\varepsilon_0^\theta+A(T)^{2}+B(T)^{2}\Big)$$
and
$$B(T)\le M\Big(\varepsilon_0^\theta+A(T)+B(T)^2\Big)$$
for all $T<\infty$, where $A$ and $B$ are defined similarly as in \eqref{A} and \eqref{B}, but with $(\u,\F)$ being replaced by $(\u^n, \F^n)$. Using the fact that $A$ and $B$ are continuous in $t$ and the hypothesis
that $\varepsilon_0$ is small, we may then
conclude that
$$A(T)+B(T)\le C\varepsilon_0^\theta,$$
for an absolute positive constant $C$ for all $T>0$.

Based on this bound, up to a subsequence, we can assume that for an arbitrary $T>0$
$$\u^n\rightarrow \u\quad \textrm{weak$^*$ in}\quad L^\infty(0,T; L^2(\R^3))\cap L^2(0,T; H^1(\R^2))$$
and
$$\F^n\rightarrow \F\quad\textrm{weak$^*$ in} \quad L^2(0,T; L^2(\R^2))\cap L^\infty((0,T)\times \R^2).$$
Then it is a routine argument to get (see for example \cite{T}) that as $n\rightarrow \infty$
$$\f1n\D\u^n\rightarrow 0\quad \textrm{in}\quad \mathcal{D}'(\R^+\times\R^2)$$
$$\u^n\cdot\nabla\u^n\rightarrow \u\cdot\nabla\u\quad\textrm{in}\quad \mathcal{D}'(\R^+\times\R^2)$$
and
$$\F^n\otimes\u^n-\u^n\otimes\F^n\rightarrow \F\otimes\u-\u\otimes\F\quad\textrm{in}\quad \mathcal{D}'(\R^+\times\R^2).$$
Taking the limit as $n\rightarrow \infty$ in the momentum equation
of \eqref{e1}, one has
\begin{equation}\label{54}
\partial_t\u+\u\cdot\nabla\u-\D\u+\nabla
P=\Dv(\overline{\F\F^\top-I})\quad\textrm{in}\quad
\mathcal{D}'(\R^+\times\R^2),
\end{equation}
where the notation $\overline{f}$ means the weak limit in $L^2$ of
$\{f^n\}$.

To complete the proof of the theorem, it suffices to show the strong convergence of the deformation
gradient in $L^2$.

\begin{Lemma}
 $\F^n-I\rightarrow \F-I$ converges strongly in $L^2(\R^2)$.
\end{Lemma}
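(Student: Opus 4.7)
The plan is to upgrade the weak convergence $\F^n \rightharpoonup \F$ to strong convergence in $L^2(\R^2)$ via a Lions--Feireisl-type renormalization and defect-measure argument, exploiting the strong compactness of the effective viscous flux $\mathfrak{G}^n = \nabla\u^n + \F^n - I$. The identity \eqref{33} together with the uniform bound $A(T) + B(T) \le C\varepsilon_0^\theta$ gives $\mathfrak{G}^n$ bounded in $L^2_{loc}((0,T);H^2_{loc}(\R^2))$ (once the $\sigma$-weights are absorbed on intervals bounded away from $t=0$), while the $\F^n$-transport equation and the projected momentum equation (using $\mathcal{P}\partial_t\u^n = \mathcal{P}\dot\u^n - \mathcal{P}(\u^n\cdot\nabla\u^n)$ to bypass the non-uniform biharmonic term) control $\partial_t \mathfrak{G}^n$ uniformly in a suitable negative Sobolev space. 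Aubin--Lions then yields, up to a subsequence, $\mathfrak{G}^n \to \mathfrak{G}$ strongly in $L^2_{loc}((0,T);L^2_{loc}(\R^2))$, and similarly $\u^n \to \u$ strongly.

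With this in hand, I would rewrite the $\F^n$-equation as $\partial_t(\F^n-I) + \u^n\cdot\nabla(\F^n-I) = \mathfrak{G}^n \F^n - (\F^n-I)\F^n$, take the inner product with $2(\F^n-I)$, and use $\Dv \u^n = 0$ to obtain the renormalized identity
\[
\partial_t |\F^n - I|^2 + \Dv\bigl(\u^n|\F^n-I|^2\bigr) = 2\mathfrak{G}^n:(\F^n-I)(\F^n)^\top - 2(\F^n-I):(\F^n-I)\F^n.
\]
The strong convergence of $\mathfrak{G}^n$ allows passage to the limit in the first right-hand-side term against the weak-$\star$ convergent product $(\F^n-I)(\F^n)^\top$, producing $2\mathfrak{G}:\overline{(\F-I)\F^\top}$; the second term produces its own weak limit $2\overline{(\F-I):(\F-I)\F}$. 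Meanwhile the limit $\F$ satisfies $\partial_t \F + \u\cdot\nabla\F = \nabla\u\F$ in the sense of distributions (obtained by passing to the limit in the linear divergence form $\partial_t \F_j + \Dv(\F_j\otimes\u - \u\otimes\F_j) = 0$), so DiPerna--Lions renormalization for transport by $\u \in L^2_t H^1_x$ with $\Dv \u = 0$ and $\F \in L^\infty$ yields the same identity with $|\F-I|^2$ and $(\F-I)\F^\top$, $(\F-I):(\F-I)\F$ in place of their overlined counterparts. Subtracting, the nonnegative defect $D = \overline{|\F-I|^2} - |\F-I|^2$ satisfies
\[
\partial_t D + \Dv(\u D) = 2\mathfrak{G}:E - 2H,
\]
where $E = \overline{(\F-I)(\F-I)^\top} - (\F-I)(\F-I)^\top$ is positive semi-definite by weak lower semicontinuity with trace $D$, so $|E| \le D$, and $H$ is the cubic scalar defect. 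Writing $\F^n = \F + a^n$ with $a^n \rightharpoonup 0$ and expanding, every surviving term in $H$ beyond the leading $D$ contribution carries at least one factor of size $\|\F-I\|_{L^\infty} \le C\varepsilon_0^{\theta/2}$, giving $H = D + H'$ with $|H'| \le C\varepsilon_0^{\theta/2} D$.

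Integrating over $\R^2$ against a compactly supported cutoff and sending it to infinity (the boundary flux vanishes by the weighted decay from Lemma~\ref{ll1} and its $\F$-analog), I obtain
\[
\frac{d}{dt}\int_{\R^2} D\,dx \le C\bigl(1 + \|\mathfrak{G}(t)\|_{L^\infty}\bigr)\int_{\R^2} D\,dx.
\]
Since the Gagliardo--Nirenberg argument used in Lemma~\ref{l3} yields $\sigma^2\|\mathfrak{G}\|_{L^\infty}^4 \in L^1_t(0,T)$, Hölder with the weight $\sigma^{-2/3} \in L^1_t(0,T)$ produces $\|\mathfrak{G}\|_{L^\infty} \in L^1_t(0,T)$; combined with $D(\cdot,0) \equiv 0$ (all approximations share the same initial data), Gronwall's inequality forces $D \equiv 0$, which delivers $\F^n - I \to \F - I$ strongly in $L^2(\R^2)$. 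The principal obstacle I anticipate is securing the uniform-in-$n$ time regularity of $\mathfrak{G}^n$ in Step~1, where the biharmonic term $\tfrac1n(-\D)^2\u^n$ in \eqref{e1a} must be handled by routing $\partial_t \nabla\u^n$ through $\mathcal{P}\dot\u^n$ rather than via direct time differentiation of the momentum equation; the cubic defect $H'$ is a secondary subtlety that is closable only because $\|\F - I\|_{L^\infty}$ is propagated small by Lemma~\ref{l3}.
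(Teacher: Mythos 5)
Your proposal takes a genuinely different route from the paper's, and in outline it is a natural one — a Lions--Feireisl-type defect-measure argument with DiPerna--Lions renormalization. The paper instead proves the lemma by comparing two pairs of integrated energy identities (for $\|\F^n-I\|_{L^2}^2$, $\|\F-I\|_{L^2}^2$, $\|\u^n\|_{L^2}^2$, $\|\u\|_{L^2}^2$), using weak lower semicontinuity of the kinetic energy and Dirichlet energy to get
$\int\overline{|\F(t)-I|^2}\,dx\le\int|\F(t)-I|^2\,dx+\mathfrak{R}(t)$, and then proving $\mathfrak{R}=0$ by a symmetric-matrix factorization $\nabla\u+(\nabla\u)^\top=\mathcal{S}\mathcal{S}^\top$ together with weak lower semicontinuity applied twice (once to $\nabla\u$ and once to $-\nabla\u$). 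That argument is entirely integral in time; it never needs strong compactness of $\mathfrak{G}^n$, never renormalizes, and never uses $\|\mathfrak{G}\|_{L^\infty_x}$ in any pointwise-in-time sense.

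The gap in your plan is the Gronwall step near $t=0$. You assert that the Gagliardo--Nirenberg argument from Lemma~\ref{l3} gives $\sigma^2\|\mathfrak{G}\|_{L^\infty}^4\in L^1_t(0,T)$, and hence $\|\mathfrak{G}\|_{L^\infty}\in L^1_t(0,T)$. But the computation in Lemma~\ref{l3} controlling $\|\mathfrak{G}\|_{L^\infty}^4$ is carried out only on $[1,T]$, where $\sigma\equiv 1$; on $(0,1]$ the $A(T)$ bound gives only $\sigma\|\nabla\u\|_{L^2}^2$, $\sigma^2\|\mathcal{P}\dot\u\|_{L^2}^2$ and $\int\sigma^2\|\nabla\mathcal{P}\dot\u\|_{L^2}^2\,dt$. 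Running the same Ladyzhenskaya interpolation, the best one gets for small $t$ is roughly
$\|\mathfrak{G}(t)\|_{L^\infty}\le M\,\sigma(t)^{-1}\bigl(\sigma^2\|\nabla\mathcal{P}\dot\u\|_{L^2}^2\bigr)^{1/4}\cdot A^{1/4}$,
and H\"older with the $L^1_t$ control of $\sigma^2\|\nabla\mathcal{P}\dot\u\|_{L^2}^2$ only pairs against $\sigma^{-4/3}$, which is not integrable on $(0,1)$. So $\|\mathfrak{G}\|_{L^\infty}\notin L^1(0,1)$ is entirely possible, and your differential inequality $\frac{d}{dt}\int D\le C(1+\|\mathfrak{G}\|_{L^\infty})\int D$ does not close from $D(0)=0$; even using that $\int D(t_0)=O(t_0^{1/2})$ (obtainable from the energy identities), the exponential factor $\exp\!\bigl(C\int_{t_0}^t\|\mathfrak{G}\|_{L^\infty}\bigr)$ grows like $\exp(Ct_0^{-1/4})$ and overwhelms the $t_0^{1/2}$ prefactor. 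The same degeneracy affects your Aubin--Lions step: strong compactness of $\mathfrak{G}^n$ is only obtained on compact subsets of $(0,T]$, so the renormalized defect equation is likewise not controlled up to $t=0$. This is not a cosmetic issue — the initial layer is exactly where the deformation gradient is rough and where the $\sigma$-weights were introduced precisely because one cannot get pointwise-in-time control. The paper's energy-comparison argument is insensitive to this because it works with time-integrated quantities from $0$ to $t$ and a convexity/weak-limit comparison in place of any pointwise $L^\infty$ bound.

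Two smaller remarks. First, for the renormalization of the limit transport equation you need the constraints $\Dv\F^\top=0$ and $\Dv\F_j=0$ for $\F$; fortunately these are linear and pass to the weak limit, so this is not circular. Second, you must be slightly careful with the sign structure in $H'$: your bound $|H'|\le C\varepsilon_0^{\theta/2}D$ uses that both $\|\F-I\|_{L^\infty}$ and $\|\F^n-\F\|_{L^\infty}$ are small, which indeed follows from $B(T)\le C\varepsilon_0^\theta$; this part of your argument is fine.
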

\begin{proof}
Multiplying the second equation in \eqref{e1} by $(\F^n-I)$, we
have, using $\Dv(\F^n)^\top=0$
\begin{equation}\label{51}
\begin{split}
 &\f12\int_{\R^2}|\F^n(t)-I|^2 dx-\f12\int_{\R^2}|\F_0-I|^2 dx=\int_0^t\int_{\R^2}\nabla\u^n:(\F^n(\F^n)^\top-I) dxds;
 \end{split}
\end{equation}
and similarly
\begin{equation}\label{5101}
\begin{split}
 &\f12\int_{\R^2}|\F(t)-I|^2 dx-\f12\int_{\R^2}|\F_0-I|^2 dx=\int_0^t\int_{\R^2}\nabla\u:(\F\F^\top-I) dxds;
 \end{split}
\end{equation}
On the other hand, taking the inner product of the momentum
equation with $\u^n$ gives
\begin{equation}\label{5102}
\begin{split}
&\int_0^t\int_{\R^2}\nabla\u^n:(\F^n(\F^n)^\top-I)dxds\\
&\quad=-\f12\int_{\R^2}|\u^n(t)|^2dx+\f12\int_{\R^2}|\u_0|^2dx-\int_0^t\int_{\R^2}|\nabla\u^n|^2dxds;
\end{split}
\end{equation}
while the inner product of \eqref{54} with $\u$ gives
\begin{equation}\label{5103}
\begin{split}
&\int_0^t\int_{\R^2}\nabla\u:\overline{\F\F^\top-I}dxds\\
&\quad=-\f12\int_{\R^2}|\u(t)|^2dx+\f12\int_{\R^2}|\u_0|^2dx-\int_0^t\int_{\R^2}|\nabla\u|^2dxds.
\end{split}
\end{equation}

Due to the convexity of $x\mapsto x^2$, it can be deduced from
\eqref{5102} and \eqref{5103} that
$$\limsup_{n\rightarrow
\infty}\int_0^t\int_{\R^2}\nabla\u^n:(\F^n(\F^n)^\top-I)dxds\le\int_0^t\int_{\R^2}\nabla\u:\overline{\F\F^\top-I}dxds,$$
and hence this in turn, according to \eqref{51}-\eqref{5101} , implies that
\begin{equation}\label{5106}
\f12\int_{\R^2}\overline{|\F(t)-I|^2}dx\le
\f12\int_{\R^2}|\F-I|^2dx+\mathfrak{R}
\end{equation}
with
$$\mathfrak{R}=\int_0^t\int_{\R^2}\nabla\u:\overline{\F\F^\top-I}dxds-\int_0^t\int_{\R^2}\nabla\u:(\F\F^\top-I)dxds.$$

Next we claim that
$$\mathfrak{R}=0.$$
Indeed, observing that
\begin{equation}\label{5104}
\begin{split}
&\int_0^t\int_{\R^2}\nabla\u:\overline{\F\F^\top-I}dxds\\
&\quad=\lim_{n\rightarrow\infty}\int_0^t\int_{\R^2}\nabla\u:(\F^n(\F^n)^\top-I)dxds\\
&\quad=\lim_{n\rightarrow\infty}\int_0^t\int_{\R^2}\nabla\u:(\F^n-I+(\F^n-I)(\F^n-I)^\top)dxds\\
&\quad=\int_0^t\int_{\R^2}\nabla\u:(\F-I)dxds+\lim_{n\rightarrow\infty}\int_0^t\int_{\R^2}\nabla\u:(\F^n-I)(\F^n-I)^\top
dxds\\
&\quad=\int_0^t\int_{\R^2}\nabla\u:(\F-I)dxds+\f12\lim_{n\rightarrow\infty}\int_0^t\int_{\R^2}\Big[\nabla\u+(\nabla\u)^\top\Big]:(\F^n-I)(\F^n-I)^\top
dxds.
\end{split}
\end{equation}
Since $\nabla\u+(\nabla\u)^\top\in L^2(\R^2)$ is symmetric, there
exists a complex-valued matrix $\mathcal{S}\in L^4(\R^2)$ such
that
$$\nabla\u+(\nabla\u)^\top=\mathcal{S}\mathcal{S}^\top.$$
Thus the last term in \eqref{5104} takes the form
$$\f12\lim_{n\rightarrow\infty}\int_0^t\int_{\R^2}|\mathcal{S}^\top(\F^n-I)|^2
dxds,$$ which is bigger than
$$\f12\int_0^t\int_{\R^2}|\mathcal{S}^\top(\F-I)|^2
dxds.$$ Therefore, \eqref{5104} further implies that
\begin{equation}\label{5105}
\int_0^t\int_{\R^2}\nabla\u:\overline{\F\F^\top-I}dxds\ge\int_0^t\int_{\R^2}\nabla\u:(\F\F^\top-I)dxds,
\end{equation}
and it follows
$$\mathfrak{R}\ge 0.$$
A similar argument as from \eqref{5104} to \eqref{5105} with
$\nabla\u$ being replaced by $-\nabla\u$ gives
\begin{equation*}
\int_0^t\int_{\R^2}\nabla\u:\overline{\F\F^\top-I}dxds\le\int_0^t\int_{\R^2}\nabla\u:(\F\F^\top-I)dxds,
\end{equation*}
and it follows
$$\mathfrak{R}\le 0.$$
Combining those two inequalities together gives the desired claim.

\eqref{5106} now implies that
$$\f12\int_{\R^2}\overline{|\F(t)-I|^2}dx\le
\f12\int_{\R^2}|\F-I|^2dx.$$ Since $\overline{|\F(t)-I|^2}\ge
|\F-I|^2$ almost everywhere because of the convexity of the map
$x\mapsto x^2$, it follows that
$$\f12\int_{\R^2}\overline{|\F(t)-I|^2}dx=
\f12\int_{\R^2}|\F-I|^2dx.$$ This, combining with the weak
convergence of $\F^n-I$ in $L^2(\R^2)$, implies $\F^n(t)-I$
converges to $\F(t)-I$ strongly in $L^2(\R^2)$ for almost all
$t>0$.
\end{proof}

With the strong convergence of $\F^n-I$ in $L^2$ and with their $L^\infty$ norm being uniformly bounded, constraints \eqref{c} and \eqref{c1} are also preserved under the limiting process. We thus complete the proof of the theorem.

\bigskip\bigskip


\section*{Acknowledgement}

The first author is partially supported by the NSF grant DMS-1108647. The second author is partially supported by the NSF grants DMS-1065964 and DMS-1159313.

\bigskip\bigskip


\end{document}